\newfont{\bb}{msbm10 at 11pt}
\newfont{\bbsmall}{msbm8 at 8pt}
\def\rth{\mathbb{R}^3}
\def\R{\mathbb{R}}
\def\B{\mathbb{B}}
\def\N{\mathbb{N}}
\def\C{\mathbb{C}}
\def\D{\mathbb{D}}
\def\esf{\mathbb{S}}
\def\Te{\mathbb{T}}
\def\cL{\mathcal{L}}
\newcommand{\ben}{\begin{enumerate}}
\newcommand{\bit}{\begin{itemize}}
\newcommand{\een}{\end{enumerate}}
\newcommand{\eit}{\end{itemize}}
\newcommand{\wh}{\widehat}
\newcommand{\wt}{\widetilde}
\def\lc{{\cal L}}
\def\g{{\gamma}}
\def\l{{\lambda}}
\def\de{{\delta}}
\def\be{{\beta}}
\def\ve{{\varepsilon}}
\def\centerbmp#1#2#3{\vskip#2\relax\centerline{\hbox to#1{\special
    {bmp:#3 x=#1, y=#2}\hfil}}}
\newtheorem{theorem}{Theorem}[section]
\newtheorem{lemma}[theorem]{Lemma}
\newtheorem{proposition}[theorem]{Proposition}
\newtheorem{remark}[theorem]{Remark}
\newtheorem{definition}[theorem]{Definition}
\newenvironment{proof}{\smallskip\noindent{\it Proof.}\hskip \labelsep}
{\hfill\penalty10000\raisebox{-.09em}{$\Box$}\par\medskip}
\begin{document}
\begin{title}
{The Dynamics Theorem for properly embedded minimal surfaces}
\end{title}
\vskip .2in

\begin{author}
{William H. Meeks III\thanks{This material is based upon
   work for the NSF under Award No. DMS - 1309236.
   Any opinions, findings, and conclusions or recommendations
   expressed in this publication are those of the authors and do not
   necessarily reflect the views of the NSF.}
   \and Joaqu\'\i n P\' erez
\and Antonio Ros\thanks{The second and third authors were supported in part
by the MEC/FEDER grant no. MTM2011-22547, and by the
regional J. Andaluc\'\i a grant no. P06-FQM-01642.}}
\end{author}
\maketitle
\begin{abstract}
In this paper we prove two theorems. The first one is a structure result
 that describes the extrinsic geometry of
an embedded surface with constant mean curvature (possibly zero)
in a homogeneously regular Riemannian three-manifold, in any small neighborhood
of a point of large {\em almost-maximal curvature.} We next apply this theorem and the
Quadratic Curvature Decay Theorem in~\cite{mpr10}
to deduce compactness, descriptive and {\it dynamics-type} results concerning
the space $D(M)$ of non-flat limits under dilations of any given properly embedded minimal
surface $M$ in $\rth$.
\vspace{.3cm}

\noindent{\it Mathematics Subject Classification:} Primary 53A10,
   Secondary 49Q05, 53C42

\noindent{\it Key words and phrases:} Minimal surface, constant mean curvature,
stability,
curvature estimates, finite total curvature, minimal lamination, dynamics theorem.
\end{abstract}

\section{Introduction.}
In this paper we will describe the local extrinsic geometry of a complete embedded surface
$M$ of constant mean curvature around a point of large Gaussian curvature in a homogeneously regular
Riemannian three-manifold (see Theorem~\ref{thm3introd}). We then obtain some
consequences among which we highlight the {\it Dynamics Theorem} (Theorem~\ref{thm4introd})
concerning the space $D(M)$ of non-flat limits under a divergent sequence of dilations of any given properly
embedded minimal surface $M$ in $\rth$; by a dilation we mean any diffeomorphism of $\R^3$ into
itself given by composition of a translation with a homothety.
An important consequence of the Dynamics Theorem
is that every properly embedded minimal surface in $\R^3$ with infinite total curvature has
a surprising amount of internal dynamical periodicity;
see Theorem~\ref{thm4introd} and Proposition~\ref{propos4.1} below for
this interpretive consequence.

In order to state the main results, it is
worth setting some specific notation to be used throughout the paper.
Given a Riemannian three-manifold $N$ and a point $p\in N$, we  denote
by $B_N(p,r), \overline{B}_N(p,r)$, $S_N^2(p,r)$ the open metric ball of center $p$
and radius $r>0$, its closure and boundary sphere, respectively.
In the case $N=\R^3$, we use the notation $\B (p,r)=B _{\R^3}(p,r)$, $\esf^2(p,r)=
S^2_{\R^3}(p,r)$ and $\B (r)=\B (\vec{0},r)$, $\esf^2(r)=\esf^2(\vec{0},r)$,
where $\vec{0}=(0,0,0)$.  For a surface $M$ embedded
in $N$, we denote by $|\sigma _{M}|$ the norm of the second fundamental form of $M$.
Finally, for a surface $M\subset \R^3$, $K_M$ denotes its Gaussian curvature function.
We will call a Riemannian three-manifold
$N$ {\it homogeneously regular} if there exists
an $\ve > 0$ such that $\ve$-balls in $N$ are uniformly close to
$\ve$-balls in $\rth$ in the $C^2$-norm. In particular, if $N$ is
compact, then $N$ is homogeneously regular.

\begin{theorem} [Local Picture on the Scale of Curvature]
\label{thm3introd}
Suppose $M$ is a complete, embedded, constant
mean curvature surface (here we include minimal
surfaces as being those with constant mean curvature zero)
with unbounded second fundamental form
in a homogeneously regular
three-manifold $N$.
Then, there exists a sequence of points
$p_n\in M$ and positive numbers $\ve _n\to 0$, such that the
following statements hold.
\begin{enumerate}
\item  For all $n$, the component $M_n$ of
$\overline{B}_N(p_n,\ve _n)\cap M$ that
contains $p_n$ is compact with boundary $\partial M_n\subset
\partial B_N(p_n,\ve _n)$.
\item Let $\lambda_{n} =|\sigma _{M_{n}}(p_{n})|$. Then,
$\lim_{n\to \infty }\ve_n\l_n=\infty $ and
$\frac{|\sigma _{M_n}|}{\l _n}\leq 1+\frac{1}{n}$ on $M_n$.
.
\item The metric balls $\l _nB_N(p_n,\ve _n)$ of radius $\l _n\ve _n$
converge uniformly to $\R^3$ with its usual metric (so that we
identify $p_n$ with $\vec{0}$ for all $n$), and, for any $k \in \N$,
the surfaces $\l _nM_n$ converge $C^k$ on compact subsets of $\rth$
and with multiplicity one to a connected, properly embedded minimal
surface $M_{\infty}$ in $\R^3$ with $\vec{0}\in M_{\infty }$,
$|\sigma _{M_{\infty}}|\leq 1$ on $M_{\infty}$ and
$|\sigma _{M_{\infty}}|(\vec{0})=1$.
\end{enumerate}
\end{theorem}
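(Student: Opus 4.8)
\emph{The approach.} The plan is a blow-up (rescaling) argument with two genuinely different ingredients: an \emph{intrinsic} point-selection step manufacturing points where $|\sigma_M|$ is almost maximal on intrinsic balls of a radius that diverges after rescaling, and a bounded-curvature compactness analysis of the rescaled surfaces. For the point-selection, fix $n$; since $|\sigma_M|$ is unbounded and $M$ is complete, start from any $q\in M$ with $|\sigma_M|(q)\ge n^2$ and iterate: stop at $q$ if $|\sigma_M|\le(1+\tfrac1n)|\sigma_M|(q)$ on $B_M(q, n/|\sigma_M|(q))$, and otherwise replace $q$ by a point of that ball where $|\sigma_M|$ exceeds $(1+\tfrac1n)|\sigma_M|(q)$ and repeat. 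The radii decay geometrically, so a non-terminating chain would be intrinsically Cauchy, hence convergent by completeness of $M$, with $|\sigma_M|\to\infty$ along it, which is impossible. Hence the process stops at some $p_n$, and with $\lambda_n:=|\sigma_M|(p_n)\ge n^2$ we get $\lambda_n\to\infty$, $n/\lambda_n\to0$, and $|\sigma_M|\le(1+\tfrac1n)\lambda_n$ on $B_M(p_n,n/\lambda_n)$.

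\emph{The rescaled limit.} Rescale the metric of $N$ by $\lambda_n$ about $p_n$. Homogeneous regularity makes the rescaled $(\lambda_n\ve)$-balls $C^2$-close to Euclidean $(\lambda_n\ve)$-balls, so they converge to flat $\R^3$, while the pieces $B_{\lambda_nM}(p_n,n)$ are almost-minimal (their mean curvature is $H/\lambda_n\to0$) and satisfy $|\sigma|\le1+\tfrac1n$. By the standard compactness theorem for surfaces of uniformly bounded second fundamental form they subconverge on compact subsets — in $C^k$ for every $k$, by elliptic estimates since the limit ambient metric is flat — to a minimal lamination of an open subset of $\R^3$ through $\vec0$; let $M_\infty$ be the leaf through $\vec0$. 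Because the radii $n$ tend to $\infty$, $M_\infty$ is complete; as a leaf it is embedded and connected; it is minimal with $|\sigma_{M_\infty}|\le1$, and $|\sigma_{M_\infty}|(\vec0)=1$ since $|\sigma_{\lambda_nM}|(p_n)=1$ passes to the limit.

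\emph{The main obstacle.} The hard part is to show $M_\infty$ is \emph{properly} embedded and that the convergence is of multiplicity one. If $M_\infty$ were a non-isolated leaf of the limit lamination, or if $\lambda_nM$ accumulated on it with two sheets (necessarily disjoint, since $\lambda_nM$ is embedded), one obtains a positive Jacobi field on $M_\infty$ — either directly, because limit leaves of minimal laminations are stable, or, in the two-sheet case, by normalizing the vertical separation of the two ordered sheets, which converges to a positive solution of the Jacobi equation; this two-sheeted picture persists over the whole connected leaf because the number of sheets is locally constant. A complete minimal surface in $\R^3$ carrying a positive Jacobi field is stable, hence a plane (Fischer-Colbrie and Schoen, do Carmo and Peng), contradicting $|\sigma_{M_\infty}|(\vec0)=1$. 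Therefore $M_\infty$ is an isolated leaf, the convergence near it is $C^k$ of multiplicity one, and $M_\infty$ is a complete embedded minimal surface of bounded curvature in $\R^3$ — hence properly embedded, by the known properness theorem for such surfaces.

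\emph{Conclusion.} It remains to extract $\ve_n$ and $M_n$. For fixed $M_\infty$, let $\tau(R)$ denote the intrinsic distance in $M_\infty$ from $\vec0$ to the farthest point of the component of $M_\infty\cap\overline{\B}(R)$ through $\vec0$; this is a finite increasing function of $R$ by properness. A diagonal argument yields $R_n\to\infty$ such that $\lambda_nM$ is $C^k$-close to $M_\infty$ on $\overline{\B}(R_n)$, $\tau(R_n)\ll n$, and (after a negligible perturbation) $R_n$ is a regular value of $|\cdot|$ on $M_\infty$ while $\ve_n:=R_n/\lambda_n$ is a regular value of $d_N(p_n,\cdot)$ on $M$. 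Let $M_n$ be the component of $\overline{B}_N(p_n,\ve_n)\cap M$ through $p_n$. Properness of $M_\infty$ keeps every other sheet of $\lambda_nM$ away from the relevant one inside this ball, so $\lambda_nM_n$ is $C^k$-close to the compact component of $M_\infty\cap\overline{\B}(R_n)$ through $\vec0$. Hence $M_n$ is compact with $\partial M_n\subset\partial B_N(p_n,\ve_n)$; it lies inside $B_M(p_n,n/\lambda_n)$ (after rescaling it is close to a compact piece of $M_\infty$ of intrinsic radius $\tau(R_n)\ll n$), so $|\sigma_{M_n}|\le(1+\tfrac1n)\lambda_n$ on $M_n$ by the point-selection step; $\ve_n\lambda_n=R_n\to\infty$; and, the chosen components exhausting the connected surface $M_\infty$, $\lambda_nM_n\to M_\infty$ on compact subsets of $\R^3$. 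A passage to a common subsequence throughout finishes the argument.
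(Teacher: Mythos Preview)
Your proof is correct and runs along the same lines as the paper's, but with one genuinely different ingredient: the point-selection step. You use an iterative ``doubling'' scheme (start at a point of large curvature, jump to a nearby point where $|\sigma_M|$ exceeds $(1+\tfrac1n)$ times the current value, observe the jump distances form a convergent geometric series, and invoke completeness of $M$ to force termination); the paper instead takes $q_n$ with $|\sigma_M|(q_n)\ge n$ and lets $p'_n$ maximize the weighted function $h_n=|\sigma_M|\cdot d_M(\cdot,\partial B_M(q_n,1))$ on the intrinsic unit ball, then reads off the almost-maximality from the inequality $h_n(z)\le h_n(p'_n)$. Both are standard point-picking devices; yours gives the clean bound $|\sigma_M|\le(1+\tfrac1n)\lambda_n$ on $B_M(p_n,n/\lambda_n)$ directly, while the paper's bound $1+t/(n-t)$ depends on the intrinsic radius $t$ and the final $1+\tfrac1n$ is only arranged at the end by reindexing.

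The remainder of your argument mirrors the paper closely. Your treatment of multiplicity one packages the paper's Lemma~3.1 in lamination language (limit leaf or multiplicity $\ge 2$ $\Rightarrow$ positive Jacobi field $\Rightarrow$ stable $\Rightarrow$ flat, contradicting $|\sigma_{M_\infty}|(\vec 0)=1$); the paper carries this out by hand, lifting to the universal cover of a large piece of $M_\infty$ and using a quasi-isometry argument to extend the two putative sheets over an unstable disk. Your extraction of $\varepsilon_n$ and $M_n$ via the intrinsic-radius function $\tau(R)$ and a diagonal argument is the content of the paper's Lemma~3.2. One place where you are terse and the paper is careful: the assertion that the two sheets extend over any prescribed compact $K\subset M_\infty$ (``the number of sheets is locally constant'') implicitly uses that both sheets lie in the intrinsic ball of controlled radius, so there is room to continue them as graphs over $K$ without exiting the ball --- this is exactly the quasi-isometry step in the paper's proof of Lemma~3.1, and it is worth making explicit.
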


Every complete, embedded minimal surface in $\R^3$
with bounded curvature is properly embedded, see
Meeks and Rosenberg~\cite{mr8}. The key idea in the proof of Theorem~\ref{thm3introd}
is to exploit this fact, together with a careful blow-up argument.
Note that a direct consequence of Theorem~\ref{thm3introd} is that every
complete embedded surface with constant mean curvature in $\R^3$ which is not
properly embedded, has natural limits under a sequence of dilations,
which are properly embedded non-flat minimal surfaces.

Theorem~\ref{thm3introd} together with the
Local Removable Singularity Theorem in~\cite{mpr10}
can be used  to understand the structure of the collection of
limits of a non-flat, properly embedded minimal surface in $\R^3$
under any divergent sequence of dilations, which is the purpose of the next
theorem. In order to clarify its statement, we need some definitions.
\begin{definition}
\label{def1.8}
\rm{ Let $M\subset \R^3$ be a non-flat, properly
embedded minimal surface. Then:
\begin{enumerate}
\item $M$ is {\it periodic} if it is invariant under a non-trivial
translation or screw motion symmetry.

\item $M$ is {\it quasi-translation-periodic} if there exists a divergent
sequence $\{ p_n\} _n\subset \R^3$ such that $\{ M-p_n\} _n$
converges $C^2$ on compact subsets of $\R^3$ to $M$; note that every
periodic surface is also quasi-translation-periodic, even in the
case the surface is invariant under a screw motion symmetry.

\item $M$ is {\it quasi-dilation-periodic} if there exists a sequence $\{ \l _n\} _n
\subset \R ^+$ and a divergent sequence $\{ p _n\}
_n\subset \R ^3$ such that the sequence $\{ \l _n(M-p_n)\} _n$ of dilations of $M$ converges in a
$C^2$-manner on compact subsets of $\R^3$ to $M$. Since $M$ is not
flat, then it is not stable and, it can be proved that the convergence of such a sequence
$\{ h_n(M-p_n)\} _n$ to $M$ has multiplicity one (see Lemma~\ref{lemma4.2} below for a similar
result about multiplicity one convergence).

\item Let $D(M)$ be the set of non-flat, properly
embedded minimal surfaces in $\R^3$ which are
obtained as $C^2$-limits
of a divergent sequence $\{ \l _n(M-p_n)\} _n$ of dilations of $M$
(i.e., the translational part $\{ p_n\} _n$ of the dilations diverges). A non-empty
subset $\Delta \subset D(M)$ is called {\it $D$-invariant,} if for
any $\Sigma \in \Delta $, then $D(\Sigma ) \subset \Delta $. A
$D$-invariant subset $\Delta \subset D(M)$ is called a {\it minimal
$D$-invariant set,} if it contains no proper, non-empty
$D$-invariant subsets. We say that $\Sigma \in D(M)$ is a {\it
minimal element,} if $\Sigma $ is an element of a minimal
$D$-invariant subset of $D(M)$.
\end{enumerate}}
\end{definition}

\begin{theorem} [Dynamics Theorem]
\label{thm4introd}
Let $M\subset \R^3$ be a properly embedded, non-flat
minimal surface. Then, $D(M)=\mbox{\rm \O}$
if and only if $M$ has finite total curvature. Now assume that $M$
has infinite total curvature, and consider $D(M)$ endowed with the
topology of $C^k$-convergence on compact sets of $\R^3$ for all $k$.
Then:
\begin{enumerate}
\item $D_1(M)=\{ \Sigma \in D(M)\ | \ \vec{0}\in \Sigma ,\ |K_{\Sigma }
|\leq 1,\ |K_{\Sigma }|(\vec{0})=1\}$ is a non-empty compact subspace of $D(M)$.
\item For any $\Sigma \in D(M)$, $D(\Sigma )$ is a closed $D$-invariant set of
$D(M)$. If $\Delta \subset D(M)$ is a $D$-invariant set, then its
closure $\overline{\Delta}$ in $D(M)$ is also $D$-invariant.
\item Suppose that $\Delta \subset D(M)$ is a non-empty minimal $D$-invariant set
which does not consist of exactly one surface of finite total
curvature. If $\Sigma \in \Delta$, then $D(\Sigma)=\Delta$ and the
closure in $D(M)$ of the path connected subspace
$\{ \l (\Sigma -p)\ | \ p\in \R^3, \, \l >0\} $ of all dilations of
$\Sigma$ equals $\Delta$. In particular, any minimal $D$-invariant
set is connected and closed in $D(M)$.
\item Every non-empty $D$-invariant subset of $D(M)$ contains minimal
elements. In particular, since $D(M)$ is $D$-invariant, then $D(M)$
always contains some minimal element.
\item Let $\Delta \subset D(M)$ be a non-empty $D$-invariant subset. If no
$\Sigma \in \Delta $ has finite total curvature, then $\Delta _1=\{
\Sigma \in \Delta\ | \ \vec{0}\in \Sigma ,\ |K_{\Sigma } |\leq 1,\
|K_{\Sigma }|(\vec{0})=1\} $ contains a minimal element $\Sigma '$,
and every such surface $\Sigma '$ satisfies that
$\Sigma '\in D(\Sigma ')$ (in particular, $\Sigma '$ is a
quasi-dilation-periodic surface of bounded curvature).
\item If a minimal element $\Sigma $ of $D(M)$ has finite genus,
then either $\Sigma $ has finite total curvature,  $\Sigma $ is a
helicoid, or $\Sigma $ is a Riemann minimal example.
\item If a minimal element $\Sigma $ of $D(M)$ has more than one end,
then every middle end of $\Sigma $ is smoothly
asymptotic to the end of a plane or catenoid.
\end{enumerate}
\end{theorem}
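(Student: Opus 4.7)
The plan is to prove the seven parts sequentially, drawing on Theorem~\ref{thm3introd} (Local Picture) and the Quadratic Curvature Decay Theorem of~\cite{mpr10} as the two main analytic inputs and on a Zorn's lemma argument for the existence of minimal elements; the recurring challenge is orchestrating diagonal and compactness arguments so that the divergent-translation hypothesis defining $D(M)$ is preserved under each passage to the limit. For the initial dichotomy: if $M$ has finite total curvature then every end is planar or catenoidal with $|\sigma_M|$ decaying quadratically, so every divergent dilation $\l_n(M-p_n)$ yields only flat or empty limits. Conversely, if $M$ has infinite total curvature, the Quadratic Curvature Decay Theorem supplies a sequence $p_n\to\infty$ on $M$ on which $|\sigma_M|$ fails to decay quadratically, and the blow-up in Theorem~\ref{thm3introd} applied at such points produces a non-flat element of $D(M)$. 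Part~1 is then immediate from Theorem~\ref{thm3introd}: the output surface $M_\infty$ lies in $D_1(M)$, and compactness of $D_1(M)$ is a diagonal argument using that $|K_\Sigma|\le 1$ provides $C^k$-precompactness on compact subsets while $|K_\Sigma|(\vec 0)=1$ prevents degeneration to a flat limit.

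Parts~2 and~3 reduce to diagonal arguments. If $\Sigma=\lim\l_n(M-p_n)$ and $\Sigma'=\lim\mu_k(\Sigma-q_k)\in D(\Sigma)$, a diagonal extraction writes $\Sigma'=\lim\mu_k\l_{n(k)}(M-(p_{n(k)}+q_k/\l_{n(k)}))$, whose translational part diverges, showing $D(\Sigma)\subset D(M)$; iterating yields $D$-invariance and closedness of $D(\Sigma)$ as well as $D$-invariance of the closure of any $D$-invariant set. For part~3, if $\Delta$ is minimal $D$-invariant and $\Sigma\in\Delta$, then $D(\Sigma)\subset\Delta$ is $D$-invariant, and it is non-empty, because otherwise $\Sigma$ would have finite total curvature and $\{\Sigma\}$ would be $D$-invariant, forcing $\Delta=\{\Sigma\}$, the excluded case. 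Minimality then yields $D(\Sigma)=\Delta$; the identification with the closure of the orbit $\{\l(\Sigma-p)\mid\l>0,\,p\in\R^3\}$ follows because this orbit is path-connected and its divergent-translation limits are precisely $D(\Sigma)=\Delta$.

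Parts~4 and~5 are the heart of the proof and the main obstacle. I would apply Zorn's lemma to the poset of non-empty closed $D$-invariant subsets of a given $D$-invariant $\Delta$, ordered by reverse inclusion; the key step is verifying that every descending chain $\{\Delta_\alpha\}$ has non-empty intersection. If some $\Delta_\alpha$ is a finite-total-curvature singleton, it is the minimum; otherwise every $\Sigma\in\Delta_\alpha$ has infinite total curvature, so $D_1(\Sigma)\subset D_1(M)$ is non-empty by part~1 and lies inside $\Delta_\alpha$ by $D$-invariance. Each $\Delta_\alpha\cap D_1(M)$ is therefore a non-empty closed subset of the compact space $D_1(M)$, and the finite intersection property supplies a common point, giving part~4. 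Part~5 follows at once: under its hypotheses a minimal $D$-invariant $\Delta'\subset\Delta$ exists, $\Delta'\cap D_1(M)$ is non-empty, and any $\Sigma'$ in this intersection satisfies $D(\Sigma')=\Delta'\ni\Sigma'$ by part~3, so $\Sigma'\in D(\Sigma')$ is a quasi-dilation-periodic surface of bounded curvature.

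Finally, parts~6 and~7 combine the bounded-curvature, quasi-dilation-periodic structure from part~5 with external classification results. For part~6, the classification of properly embedded minimal surfaces of finite genus and bounded curvature in $\R^3$ (via results of Colding--Minicozzi and Meeks--P\'erez--Ros) forces such a $\Sigma$ to be a helicoid, a Riemann minimal example, or of finite total curvature. For part~7, the self-similarity $\Sigma\in D(\Sigma)$ yields uniform curvature bounds on every middle end, and the structure theory of middle ends of properly embedded minimal surfaces then ensures that each middle end is smoothly asymptotic to a plane or catenoid.
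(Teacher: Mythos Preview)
Your treatment of the equivalence and of items~1--5 matches the paper's proof: the Zorn's lemma argument on closed $D$-invariant subsets, using compactness of $D_1(M)$ to ensure that descending chains have non-empty intersection, is exactly what the paper does. (One minor point you glide over in item~1: a limit of surfaces in $D_1(M)$ is a priori only a minimal lamination of $\R^3$, and the paper uses bounded curvature, properness of leaves, the Strong Halfspace Theorem, and a multiplicity-one argument via Lemma~\ref{lemma4.2} to conclude it is a single surface in $D_1(M)$.)

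The genuine gaps are in items~6 and~7. For item~6, you invoke a ``classification of properly embedded minimal surfaces of finite genus and bounded curvature,'' but no such result yields the stated trichotomy directly---the genus-one helicoid, for example, has finite genus, bounded curvature, and infinite total curvature, yet is none of the three listed surfaces. The missing step, which the paper supplies, is to use the minimal-element property $\Sigma\in D(\Sigma)$ to force $\Sigma$ to have genus \emph{zero}: the finitely many handles of $\Sigma$ live in a compact set that divergent dilations send away, so the limit (which is $\Sigma$ again) must be a planar domain; only then do the genus-zero classification results of~\cite{col1,mr8,mpr6} apply. For item~7, bounded curvature alone does not make a middle end asymptotically planar or catenoidal, and ``structure theory of middle ends'' is not a substitute for an argument. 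The paper's proof is an area-growth comparison: by~\cite{ckmr1} a middle end representative $E$ lies between two half-catenoids or planes and has quadratic area growth; if $E$ had infinite total curvature then $D(E)\subset D(\Sigma)$ would be non-empty, hence equal to $D(\Sigma)$ by minimality, so $\Sigma\in D(E)$; but the monotonicity formula forces every surface in $D(E)$ to have area growth at most that of $E$, while $\Sigma$ has strictly larger area growth coming from its other ends---a contradiction. This mechanism is absent from your sketch.
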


\section{Preliminaries.}
When proving the results stated in the introduction, we will make use of three results from our previous paper~\cite{mpr10}. For the reader's convenience,
we collect here these results and the definitions necessary in order to state them.

\begin{definition}
\label{deflamination}
{\rm
 A {\it codimension one
lamination} $\cL$ of a Riemannian three-manifold $N$ is the union of a
collection of pairwise disjoint, connected, injectively immersed
surfaces called leaves of $\cL$, with a certain local product structure. More precisely, it
is a pair $({\mathcal L},{\mathcal A})$ satisfying:
\begin{enumerate}
\item ${\mathcal L}$ is a closed subset of $N$;
\item ${\mathcal A}=\{ \varphi _{\be }\colon \D \times (0,1)\to
U_{\be }\} _{\be }$ is an atlas of coordinate charts of $N$ (here
$\D $ is the open unit disk in $\R^2$, $(0,1)$ is the open unit
interval and $U_{\be }$ is an open subset of $N$); note that
although $N$ is assumed to be smooth, we
only require that the regularity of the atlas (i.e., that of
its change of coordinates) is of class $C^0$, i.e., ${\cal A}$
is an atlas for the topological structure of $N$.
\item For each $\be $, there exists a closed subset $C_{\be }$ of
$(0,1)$ such that $\varphi _{\be }^{-1}(U_{\be }\cap {\mathcal L})=\D \times
C_{\be}$.
\end{enumerate}
}
\end{definition}

A leaf $L$ of ${\cal L}$ is called a {\it limit leaf} if for some (or every) point $p\in L$, there exists a coordinate chart
$\varphi _{\beta }\colon \D \times (0,1)\to U_{\be }$ as in Definition~\ref{deflamination}
such that $p\in U_{\be }$ and $\varphi _{\be }^{-1}(p)=(x,t)$ with
$t$ belonging to the accumulation set of $C_{\be }$.

We will simply denote laminations by ${\mathcal L}$, omitting the
charts $\varphi _{\be }$ in ${\mathcal A}$.
Every lamination ${\mathcal L}$ naturally decomposes into a
collection of disjoint, connected topological surfaces (locally given by $\varphi
_{\be }(\D \times \{ t\} )$, $t\in C_{\be }$, with the notation
above), called the {\it leaves} of ${\mathcal L}$. As usual, the
regularity of ${\mathcal L}$ requires the corresponding
regularity on the change of coordinate charts $\varphi _{\be }$.
A lamination ${\cal L}$ of $N$ is called a {\it foliation} of $N$ if ${\cal L}=N$.
A lamination ${\cal L}$ of $N$ is said to be a {\it minimal lamination}
if all its leaves are smooth with zero mean curvature. Since the leaves of ${\cal L}$ are pairwise
disjoint, we can consider the second fundamental form
$|\sigma _{\cal L}|$ of ${\cal L}$, which is the function defined at any point $p\in {\cal L}$
as $|\sigma _L|(p)$, where $L$ is the unique leaf of ${\cal L}$ passing through $p$.

There are three key results that we will need from~\cite{mpr10}, and which we
list below for the readers convenience.

\begin{theorem} [Local Removable Singularity Theorem~\cite{mpr10}]
\label{tt2.2}
A minimal lamination $\lc$ of a punctured ball $B_N(p,r)-\{ p\} $
in a Riemannian three-manifold $N$ extends to a minimal lamination
of $B_N(p,r)$ if and only if there exists a positive constant $C$
such that $|\sigma _{{\cal L}}|\, d_N(p,\cdot )\leq C$ in some subball.
\end{theorem}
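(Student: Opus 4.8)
One implication is elementary. If $\mathcal{L}$ extends to a minimal lamination $\widehat{\mathcal{L}}$ of $B_N(p,r)$, then either $p\notin\widehat{\mathcal{L}}$, so $\mathcal{L}$ is empty near $p$ and the estimate holds vacuously, or $p$ lies on a leaf $L$ of $\widehat{\mathcal{L}}$; in that case $L$ is a smooth minimal surface through $p$, and by the local product structure of Definition~\ref{deflamination} together with interior curvature estimates for the minimal surface equation, $|\sigma_{\widehat{\mathcal{L}}}|$ is bounded on some $B_N(p,\delta)$. Hence $|\sigma_{\mathcal{L}}|\,d_N(p,\cdot)\to 0$ as $x\to p$, far stronger than boundedness in a subball. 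The content is the converse, so assume henceforth that $|\sigma_{\mathcal{L}}|\,d_N(p,\cdot)\le C$ on some $B_N(p,\varepsilon)$ and that $p\in\overline{\mathcal{L}}$ (nothing to prove otherwise); the goal is to show that $\overline{\mathcal{L}}=\mathcal{L}\cup\{p\}$ has the local product structure of Definition~\ref{deflamination} near $p$.

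The first step is a blow-up analysis at $p$. In geodesic normal coordinates centered at $p$ the metric of $N$ is $C^2$-close to the Euclidean one and the hypothesis is scale invariant, so for any sequence $t_n\downarrow 0$ the rescaled laminations $\frac{1}{t_n}\mathcal{L}$ are minimal laminations of domains exhausting $\R^3-\{\vec 0\}$, all satisfying $|\sigma|(x)\,|x|\le C$. The curvature bound furnishes uniform graphical charts for the leaves on dyadic annuli around $\vec 0$, so elliptic estimates and a diagonal argument give, along a subsequence, convergence of the leaves (with multiplicities, in $C^k_{\mathrm{loc}}$ for every $k$) to a minimal lamination $\mathcal{L}_\infty$ of $\R^3-\{\vec 0\}$ with $|\sigma_{\mathcal{L}_\infty}|(x)\,|x|\le C$ for all $x\ne\vec 0$. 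It therefore suffices to control the structure of such tangent laminations.

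First one shows that every tangent lamination $\mathcal{L}_\infty$ extends to a minimal lamination of $\R^3$: its limit leaves are stable (stability of limit leaves of minimal laminations, Meeks--P\'erez--Ros), hence, being properly embedded stable minimal surfaces in $\R^3-\{\vec 0\}$ with curvature decay $C/|x|$, the point $\vec 0$ is a removable singularity for them and they become complete stable minimal surfaces, i.e.\ planes (Fischer-Colbrie--Schoen, do Carmo--Peng); and for a non-limit leaf $L_\infty$, a monotonicity/blow-down analysis at $\vec 0$ together with embeddedness shows that $\vec 0$ lies on $L_\infty$ or is removable for it, so $\overline{L_\infty}$ is a properly embedded minimal surface in $\R^3$. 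To conclude that $\mathcal{L}$ itself extends, it remains to upgrade this to the assertion that at all sufficiently small scales the tangent laminations at $p$ are laminations by mutually parallel planes through $\vec 0$ (distinct planes through $\vec 0$ meet, so flatness forces parallelism). Granting that, the coarse estimate improves to $|\sigma_{\mathcal{L}}|\,d_N(p,\cdot)\to 0$; the leaves of $\mathcal{L}$ are then, on each scale-$\rho$ annulus around $p$, graphs of arbitrarily small gradient over a fixed plane whose normal converges as $\rho\to 0$ (uniqueness of the tangent lamination at $p$), and assembling these graphs over all small scales, while using properness of the individual leaves in the punctured ball, verifies the local product structure of Definition~\ref{deflamination} for $\mathcal{L}\cup\{p\}$.

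The main obstacle is precisely this last reduction, namely that the tangent laminations at $p$ are flat at all small scales. A non-flat leaf of a tangent lamination, say a catenoid, records a genuine neck of $\mathcal{L}$ whose waist radius is comparable to its distance from $p$, and such a neck respects the coarse bound $|\sigma|\,d_N(p,\cdot)\le C$ with a fixed constant, so it cannot be excluded leaf by leaf; one must rule out infinite chains of such necks of $\mathcal{L}$ accumulating at $p$ by exploiting simultaneously the scale-invariant curvature bound, the disjointness of the leaves of $\mathcal{L}$, and the classification of complete embedded minimal surfaces with $1/|x|$ curvature decay. By comparison, the blow-up reduction, the stability input, and the leafwise removable-singularity statements are comparatively routine.
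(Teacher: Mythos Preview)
The paper does not contain a proof of this theorem. Theorem~\ref{tt2.2} is listed in Section~2 (Preliminaries) as one of ``three key results that we will need from~\cite{mpr10}, and which we list below for the readers convenience''; it is quoted from the companion paper~\cite{mpr10} and used here as a black box (for instance in the proof of Theorem~\ref{corol5.4}). There is therefore nothing in the present paper to compare your proposal against.

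As for the proposal itself, you have written an honest outline rather than a proof, and you say so explicitly in your final paragraph: the crux is to show that every tangent lamination at $p$ is a lamination by parallel planes, and you acknowledge that you have not carried this out. Your sketch correctly identifies the ingredients (scale-invariant blow-up, stability of limit leaves, classification of stable minimal surfaces in $\R^3$, improvement from $|\sigma|\,d\le C$ to $|\sigma|\,d\to 0$), but the step you flag---ruling out an accumulating sequence of catenoidal necks, each individually compatible with the bound $|\sigma|\,d\le C$---is exactly the substance of the theorem, and it is not supplied. In particular, the sentence ``one must rule out infinite chains of such necks \ldots\ by exploiting simultaneously the scale-invariant curvature bound, the disjointness of the leaves of $\mathcal{L}$, and the classification of complete embedded minimal surfaces with $1/|x|$ curvature decay'' is a description of what a proof must accomplish, not a proof. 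So the proposal has a genuine gap at the point you yourself identify as the main obstacle.
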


\begin{definition}
{\rm
In the sequel, we will denote by $R=\sqrt{x_1^2+x_2^2+x_3^2}$ the radial distance
to the origin $\vec{0}\in \R^3$. A surface $M\subset \R^3$
has {\em quadratic decay of curvature} if there exists $C>0$ such that
$|K_M|R^2 \leq {C}$ on $M$. Analogously, a minimal lamination ${\cal L}$ of $\R^3$
or of $\R^3-\{ \vec{0}\} $
has quadratic decay of curvature if $|K_{\cal L}|R^2$ is bounded on ${\cal L}$, where
$|K_{\cal L}|$ is the function that associates to each point $p\in {\cal L}$ the
absolute Gaussian curvature of the unique leaf of ${\cal L}$ passing through $p$.
}
\end{definition}

\begin{theorem}[Quadratic Curvature Decay Theorem~\cite{mpr10}]
\label{tt2.4}
A complete, embedded minimal surface in $\R^3$
with compact boundary (possibly empty) has quadratic decay of
curvature if and only if it has finite total curvature. In
particular, a complete, connected embedded minimal surface $M\subset
\R^3$ with compact boundary and quadratic decay of curvature is
properly embedded in $\R^3$.
\end{theorem}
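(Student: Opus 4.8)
I would prove the two implications separately. ``Finite total curvature $\Rightarrow$ quadratic curvature decay'' is the classical direction: if $M$ is compact it is trivial, and otherwise, by Huber's theorem $M$ is conformally a compact surface with boundary punctured at finitely many interior points, by Osserman's analysis the Weierstrass data extend meromorphically to the punctures, and since $M$ is embedded with compact boundary each end is embedded of finite total curvature, hence (Schoen) asymptotic to a plane or a half-catenoid; a direct computation then gives $|K_M|=O(R^{-4})$ on each end, so $|K_M|R^2\to 0$ along the ends, and since $M$ minus a neighbourhood of its finitely many ends is compact, $|K_M|R^2$ is bounded on $M$.

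The converse, which also yields the ``in particular'' clause, carries all of the difficulty and is driven by a blow-down argument feeding into the Local Removable Singularity Theorem~\ref{tt2.2}. On a minimal surface $|\sigma_M|^2=-2K_M$, so the hypothesis reads $|\sigma_M|\,R\le\sqrt{2C}$. After a translation assume $\partial M\subset\B(1)$; if $M$ is compact we are done, so assume not and set $M_t=\frac{1}{t}M$ for $t>0$. Quadratic curvature decay is scale invariant, so $|\sigma_{M_t}|\,R\le\sqrt{2C}$ for all $t$, while $\partial M_t\subset\B(1/t)\to\{\vec{0}\}$; hence on each annulus $\B(2r)\setminus\overline{\B}(r/2)$ the surfaces $M_t$ with $t$ large are boundaryless with curvature at most $4C/r^2$. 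Being embedded with uniformly locally bounded curvature, they subconverge to a minimal lamination $\cL$ of $\R^3\setminus\{\vec{0}\}$ with $|\sigma_\cL|\,R\le\sqrt{2C}$, and by Theorem~\ref{tt2.2} this $\cL$ extends to a minimal lamination $\widehat\cL$ of $\R^3$ with quadratic curvature decay. I would then identify $\widehat\cL$: letting $t\to\infty$ along the full parameter and using that the limit does not depend on the subsequence makes $\widehat\cL$ scale invariant, hence a minimal cone of $\R^3$, and the only minimal laminations of $\R^3$ that are cones are single planes through $\vec{0}$, say $P$, arising in the limit with some multiplicity $k$. Granting this, outside a large ball $M$ is a $k$-sheeted multigraph over $P\setminus\B(r)$ converging to $P$; in particular $M$ is properly embedded, has at most $k$ ends, and each end is an exterior minimal graph with gradient tending to zero, so by the classical asymptotics for the minimal surface equation (Schoen, Collin) its graph function satisfies $u=a\log\rho+b+O(\rho^{-1})$ in the planar radial coordinate $\rho$, whence $|K_M|=O(R^{-4})$ and the end has finite total curvature. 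Summing over the finitely many ends gives $\int_M|K_M|<\infty$, and the ``in particular'' clause follows since a complete embedded minimal surface of finite total curvature with compact boundary is properly embedded.

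Everything hard is in the converse, at three places: (i) extracting a genuine \emph{lamination} limit from the rescaled family (uniform local area bounds, or the multigraph and one-sided curvature estimates for embedded minimal surfaces); (ii) showing $\widehat\cL$ is a single plane --- equivalently, that the blow-down limit is unique and scale invariant and that the multiplicity $k$ is finite, the finiteness being exactly where the quadratic, rather than merely bounded, decay must be used, since it is what excludes helicoid-type infinite spiralling; and (iii) improving ``the end converges to a plane'' to the quantitative decay $|K_M|=O(R^{-4})$ needed for integrability, since the borderline bound $|K_M|=O(R^{-2})$ alone does not give $\int_M|K_M|<\infty$. Steps (i) and (ii) are precisely where Theorem~\ref{tt2.2} is brought to bear and constitute the heart of the argument.
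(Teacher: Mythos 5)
Note first that the paper never proves this statement: Theorem~\ref{tt2.4} is listed in Section~2 as one of three results imported from the authors' companion paper~\cite{mpr10} ``for the reader's convenience,'' so there is no internal proof here to compare yours against. That said, your forward direction (finite total curvature $\Rightarrow$ quadratic decay, via Huber--Osserman--Schoen end asymptotics) is correct and standard, and your overall blow-down strategy for the converse is a plausible way in, but as written the converse has gaps that go beyond mere compression.

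The first and most serious is that you assert the blow-down limit of $M_t=\tfrac{1}{t}M$ is independent of the subsequence and therefore scale-invariant; this uniqueness is exactly the hard point, and the quadratic decay bound alone does not give it. Second, even granting scale invariance, the claim that ``the only minimal laminations of $\R^3$ that are cones are single planes through $\vec{0}$'' is false as stated: the foliation of $\R^3$ by all parallel planes is dilation-invariant, so ruling out foliated or multi-leaf limits requires an additional argument tying the structure of $\widehat{\cal L}$ to the connectedness of $M$ and to a finite sheeting count. Third, the finiteness of the multiplicity $k$ --- the step that excludes infinite spiralling or accumulating sheets --- is acknowledged but not derived; applying Theorem~\ref{tt2.2} once to extend $\cL$ across $\vec{0}$ does not by itself control multiplicity. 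You explicitly flag (i)--(iii) as ``the heart of the argument,'' but naming the gap does not fill it. Finally, be careful of hidden circularity: Proposition~\ref{corollamin} is a corollary in~\cite{mpr10} that sits downstream of Theorem~\ref{tt2.4}, so any route to classifying the blow-down that implicitly invokes it cannot serve as an independent proof of this theorem.
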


\begin{proposition}[Corollary~6.3 in~\cite{mpr10}]
\label{corollamin}
Let ${\cal L}$ be a non-flat minimal lamination
of $\R^3-\{ \vec{0}\} $. If ${\cal L}$ has quadratic decay of
curvature, then ${\cal L}$ consists of a single leaf, which extends
across $\vec{0}$ to a properly embedded minimal surface with finite total curvature
in $\R^3$.
\end{proposition}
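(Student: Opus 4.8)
The plan is to first use the Local Removable Singularity Theorem to turn $\mathcal{L}$ into a minimal lamination of all of $\R^{3}$, and then to show that this extended lamination is a single properly embedded minimal surface of finite total curvature. For the first step, recall that on a minimal surface $|\sigma|^{2}=2|K|$, so the hypothesis $|K_{\mathcal{L}}|\,R^{2}\le C$ is precisely the bound $|\sigma_{\mathcal{L}}|\,d_{\R^{3}}(\vec 0,\cdot)\le \sqrt{2C}$ on $\mathcal{L}$, since $d_{\R^{3}}(\vec 0,\cdot)=R$. Applying Theorem~\ref{tt2.2} in a small ball $\B(\rho)$, the lamination $\mathcal{L}\cap(\B(\rho)-\{\vec 0\})$ extends to a minimal lamination of $\B(\rho)$, which must agree with $\mathcal{L}$ off $\vec 0$ and so glues with $\mathcal{L}$ into a minimal lamination $\overline{\mathcal{L}}$ of $\R^{3}$ with $\overline{\mathcal{L}}\cap(\R^{3}-\{\vec 0\})=\mathcal{L}$. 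Since $\overline{\mathcal{L}}$ is a smooth minimal lamination of the compact ball $\overline{\B}(\rho)$, its curvature is bounded there, and off $\vec 0$ it is at most $C/\rho^{2}$; hence $\overline{\mathcal{L}}$ has bounded curvature, it still has quadratic decay of curvature, and it is non-flat since $\mathcal{L}$ is.

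Next I would describe the leaves of $\overline{\mathcal{L}}$. Each leaf is a connected, complete, embedded minimal surface in $\R^{3}$ with bounded second fundamental form, hence properly embedded by Meeks--Rosenberg~\cite{mr8}. Being closed subsets of $\R^{3}$, distinct leaves cannot accumulate on one another, so $\overline{\mathcal{L}}$ has no limit leaves and its leaves are exactly the connected components of the closed set $\overline{\mathcal{L}}$. Each leaf has empty (hence compact) boundary and quadratic decay of curvature, so by the Quadratic Curvature Decay Theorem~\ref{tt2.4} it has finite total curvature. Also, $\overline{\mathcal{L}}$ has no planar leaf: if some leaf $P$ were a plane, every other leaf, being connected and disjoint from $P$, would lie in one of the two open halfspaces bounded by $P$ and hence be a plane by the Halfspace Theorem, so $\overline{\mathcal{L}}$ would be a lamination of $\R^{3}$ by parallel planes, contradicting non-flatness. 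Thus every leaf of $\overline{\mathcal{L}}$ is a non-flat, properly embedded minimal surface of finite total curvature.

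The remaining step --- which I expect to be the main obstacle --- is to show $\overline{\mathcal{L}}$ has a single leaf. Suppose $L_{0}\neq L_{1}$ are leaves; they are disjoint, and $L_{0}$, being connected, orientable (finite total curvature) and properly embedded, separates $\R^{3}$ into two components, one of which, $\Omega$, contains $L_{1}$. Outside a large ball, $L_{0}$ is a finite union of annular ends, each asymptotic to a plane or a half-catenoid and all perpendicular to a common direction. If $L_{0}$ has no catenoidal end, then outside a large ball each component of $\R^{3}-L_{0}$ lies in a halfspace, so $L_{1}$ lies in a halfspace and is therefore a plane by the Halfspace Theorem, contradicting the previous paragraph. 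If $L_{0}$ has a catenoidal end, one argues more delicately: comparing the logarithmic growths of the ends of $L_{1}$ (whose vertical fluxes sum to zero) with those of the catenoidal ends of $L_{0}$ and invoking the maximum principle at infinity for disjoint properly embedded minimal surfaces forces $L_{1}$ to meet $L_{0}$, again a contradiction. Hence $\overline{\mathcal{L}}$ is a single non-flat, properly embedded minimal surface $M$ of finite total curvature. Restricting to $\R^{3}-\{\vec 0\}$, $\mathcal{L}$ then equals $M$ (if $\vec 0\notin M$) or $M-\{\vec 0\}$ (if $\vec 0\in M$), which in either case is one connected surface, so $\mathcal{L}$ consists of a single leaf and $M$ is the claimed extension of $\mathcal{L}$ across $\vec 0$.
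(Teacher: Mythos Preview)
This proposition is not proved in the present paper: it is quoted verbatim from~\cite{mpr10} (as Corollary~6.3 there) among the preliminary results, so there is no proof here to compare your attempt against. That said, your overall strategy---extend $\mathcal{L}$ across $\vec{0}$ via Theorem~\ref{tt2.2}, apply Theorem~\ref{tt2.4} to each leaf, then argue there is only one leaf---is the natural one and your first paragraph is correct.

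There are, however, two genuine gaps. First, the sentence ``Being closed subsets of $\R^{3}$, distinct leaves cannot accumulate on one another, so $\overline{\mathcal{L}}$ has no limit leaves'' is a non sequitur: properness of each leaf only prevents a \emph{single} leaf from accumulating on another, but a limit leaf can arise from infinitely many distinct (properly embedded) leaves clustering on it, as with a sequence of parallel planes. The correct argument is the one implicit later in your paragraph: a limit leaf would have stable two-sided cover, hence be a plane, which your Halfspace argument already excludes. Second, and more seriously, your final step is both unnecessarily elaborate and incomplete. The catenoidal case is only gestured at (``one argues more delicately''), and the planar-end case as written only controls $L_{1}$ outside a large ball, which does not immediately feed into the Halfspace Theorem. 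All of this is bypassed by a single application of the Strong Halfspace Theorem of Hoffman--Meeks: two disjoint, connected, properly immersed minimal surfaces in $\R^{3}$ must be parallel planes. Since you have already shown every leaf of $\overline{\mathcal{L}}$ is properly embedded and that no leaf is a plane, this forces $\overline{\mathcal{L}}$ to have exactly one leaf in one line (this is exactly how the present paper handles the analogous situation in the proof of Theorem~\ref{thm4introd}).
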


\section{Proof of the Local Picture Theorem on the Scale of Curvature.}
\label{seclt1}

The proof of Theorem~\ref{thm3introd} stated in the introduction
uses a blow-up technique, where the scaling factors are given by
the norm of the second fundamental form
of $M$ at points of large {\it almost-maximal
curvature,} a concept which we develop below. After the blowing-up
process, we will find a limit which is a complete minimal surface
with bounded Gaussian curvature in $\R^3$, conditions which are known to imply
properness for the limit~\cite{mr8}. This properness property will lead to the
conclusions of Theorem~\ref{thm3introd}.

{\it Proof of Theorem~\ref{thm3introd}.}\
Since $N$ is homogeneously regular,
after a fixed constant scaling of the metric
of $N$ we may assume that the injectivity radius of $N$
is greater than 1. The first step in the proof is
to obtain special points $p'_n\in M$, called
{\it blow-up points} or {\it points of almost-maximal curvature.}
First consider an arbitrary
sequence of points $q_n\in M$ such that
$|\sigma _M|(q_n)\geq n$, which exists since
$|\sigma _M|$ is unbounded. Let $p'_n\in
B_{M}(q_n,1)$ be a maximum of the function
$h_n=|\sigma _M|\, d_M\left( \cdot ,\partial B_M(q_n,1 )
\right) $, where
$d_M$ stands for the intrinsic distance on $M$.
We define $\l '_n=|\sigma _M|(p'_n)$. Note that for each $n\in \N$,
\[
\l '_n\geq \l '_nd_M(p'_n,\partial B_M(q_n,1))
=h_n(p'_n)\geq h_n(q_n)=|\sigma _M|(q_n)\geq n.
\]

Fix $t>0$. Since $\l '_n\to \infty $ as
$n\to \infty $, the sequence
$\{ \l '_nB_N(p'_n,\frac{t}{\l '_n})\} _n$ converges
to the open ball $\B (t)$ of $\R^3$ with its usual
metric, where we have used geodesic coordinates in $N$
 centered at $p'_n$ and identified $p'_n$ with
$\vec{0}$. Similarly, we can consider
$\{ \l '_nB_M(p'_n,\frac{t}{\l '_n})\} _n$ to be a
sequence of embedded, constant mean curvature
surfaces with non-empty topological boundary, all passing through $\vec{0}$
with norm of their second fundamental forms $1$ at this point.
We claim that the sequence of second fundamental forms of
$\l'_nB_M(p'_n,\frac{t}{\l '_n})$ is uniformly bounded. To see this,
pick a point $z_n\in B_M(p'_n,\frac{t}{\l '_n})$.
Note that for $n$ large enough, $z_n$ lies in
$B_M(q_n,1)$. Then,
\begin{equation}
\label{eq:*}
\frac{|\sigma _M|(z_n)}{\l '_n}=
\frac{h_n(z_n)}{\l '_n d_M(z_n,\partial B_M(q_n,1))}\leq
\frac{d_M(p'_n,\partial B_M(q_n,1))}{d_M(z_n,\partial B_M(q_n,1))}.
\end{equation}
By the triangle inequality, $d_M(p'_n,\partial B_M(q_n,1)) \leq
\frac{t}{\l '_n}+d_M(z_n,\partial B_M(q_n,1))$, and so,
\[
\frac{d_M(p'_n,\partial B_M(q_n,1))}{d_M(z_n,\partial B_M(q_n,1))}
\leq 1+\frac{t}{\l '_nd_M(z_n,\partial B_M(q_n,1))}
\]
\begin{equation}
\label{eq:**} \leq 1+\frac{t}{\l '_n\left( d_M(p'_n,\partial
B_M(q_n,1)) -\frac{t}{\l '_n}\right) }\leq 1+\frac{t}{n-t},
\end{equation}
which tends to $1$ as $n\to \infty $.

It follows that after extracting a subsequence,
the surfaces $\l '_nB_M(p'_n,\frac{t}{\l '_n})$
converge (possibly with non-constant integer or infinite multiplicity) to an embedded minimal
surface $M_{\infty }(t)$ contained in $\B (t)$ with
bounded Gaussian curvature, that passes through
$\vec{0}$ and with norm of its second fundamental form $1$ at the
origin; note that the topological boundary $\partial M_{\infty}(t)$ of $M_{\infty}(t)$ need
not be either smooth or contained in $\esf^2(t)$.
Consider the surface $M_{\infty }(1)$
together with the surfaces $\l '_nB_M(p'_n,\frac{1}
{\l '_n})$ that converge to it (after passing to a
subsequence). Note that $M_{\infty }(1)$ is contained
in $M_{\infty }=\bigcup _{t\geq 1}M_{\infty }(t)$,
which is a complete, injectively immersed minimal surface in $\R^3$, with
$\vec{0}\in M_{\infty }$ and $|\sigma _{M_{\infty }}|(\vec{0})=1$.
By construction, $M_{\infty }$ has bounded Gaussian curvature,
so it is properly embedded in $\R^3$~\cite{mr8}.

The next result describes how the surfaces $\l '_nB_M(p'_n,\frac{t}
{\l '_n})$ that give rise to the limit $M_{\infty }(t)$ sit nicely in space with
respect to the surface $M_{\infty }(t+1)$.

\begin{lemma}
\label{lemma3.1}
Given $t>0$, there exist $k\in \N$ such that if $n\geq k$, then
$\l '_nB_M(p'_n,\frac{t}{\l '_n})$ is  contained in a small regular neighborhood of $M_{\infty }(t+1)$ in $\R^3$. Furthermore,
$\l '_nB_M(p'_n,\frac{t}{\l '_n})$ is a small normal
graph over its projection to $M_{\infty }(t+1)$.
\end{lemma}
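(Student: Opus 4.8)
The plan is to show that for $n$ large the rescaled piece $\Sigma _n:=\l '_nB_M(p'_n,\frac{t}{\l '_n})$ is so $C^1$-close to $M_\infty (t+1)$ that it fits inside a fixed tubular neighborhood of $M_\infty (t+1)$ in $\R^3$ and projects onto it diffeomorphically. First I would record the uniform geometric control on these pieces. By the estimate established just above the lemma, $|\sigma _{\Sigma _n}|\leq 1+\frac{t}{n-t}$ on $\Sigma _n$, so for $n$ large its Gauss curvature $K_{\Sigma _n}=-\tfrac12|\sigma _{\Sigma _n}|^2$ is uniformly bounded below; since $\Sigma _n$ is an intrinsic metric ball of radius $t$, Bishop--Gromov area comparison bounds $\mathrm{Area}(\Sigma _n)$ uniformly in $n$, and together with the curvature bound and interior elliptic estimates for the minimal surface equation this yields uniform $C^k$-bounds. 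Hence, along the subsequence already used to define the limits, both $\Sigma _n$ and the larger pieces $\l '_nB_M(p'_n,\frac{t+1}{\l '_n})$ converge in $C^k$ on compact subsets of their interiors to $M_\infty (t)\subset M_\infty (t+1)$, with $M_\infty $ complete, of bounded curvature, and hence properly embedded in $\R^3$ by~\cite{mr8}. Since $M_\infty $ is properly embedded, the compact piece $M_\infty (t)$ lies in the interior of the smooth embedded surface $M_\infty (t+1)$, so a relatively compact open neighborhood $U$ of $M_\infty (t)$ in $M_\infty (t+1)$ carries a regular tubular neighborhood $\mathcal N\subset \R^3$ of some width $\delta >0$, with normal projection $\pi \colon \mathcal N\to M_\infty (t+1)$.

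Next I would prove the containment and reduce the lemma to an injectivity statement. Because $B_M(p'_n,\frac{t}{\l '_n})\subset B_M(p'_n,\frac{t+1}{\l '_n})$ and the ambient balls $\l '_nB_N(p'_n,\frac{t}{\l '_n})$ converge to $\B (t)$, for $n$ large $\Sigma _n$ is contained in a fixed compact subset of the open region in which $\l '_nB_M(p'_n,\frac{t+1}{\l '_n})\to M_\infty (t+1)$; hence every point of $\Sigma _n$ lies within distance $\ve _n\to 0$ of $M_\infty (t)\subset U$, and, by the $C^1$-convergence, with tangent plane within $o(1)$ of that of $M_\infty (t+1)$ at the nearby point. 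Therefore, for $n$ large, $\Sigma _n\subset \mathcal N$, $\pi |_{\Sigma _n}$ has image in $U\subset M_\infty (t+1)$, and $\pi |_{\Sigma _n}$ is a local diffeomorphism onto its image with differential arbitrarily close to an isometry on $T\Sigma _n$; thus $\Sigma _n$ is a ``multigraph'' of arbitrarily small height and gradient over $\pi (\Sigma _n)$. It only remains to show that $\pi |_{\Sigma _n}$ is injective for $n$ large --- equivalently, that the single connected intrinsic piece $\Sigma _n$ converges with multiplicity one --- and this is the step I expect to be the main obstacle, since a priori one only knows that the pieces converge with some uncontrolled multiplicity.

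Finally I would establish this injectivity from the embeddedness of $M_\infty $. Suppose not; passing to a subsequence, take $x_n\neq y_n$ in $\Sigma _n$ with $\pi (x_n)=\pi (y_n)=:q_n$. Both points have intrinsic distance $\leq t$ from $\vec 0$ in $\l '_nM$, so $d_{\l '_nM}(x_n,y_n)\leq 2t$, and I consider two cases. If $d_{\l '_nM}(x_n,y_n)\to 0$ along a subsequence, the minimizing arc of $\l '_nM$ from $y_n$ to $x_n$ has length $\to 0$ and stays in a region where $|\sigma |\leq 1+o(1)$, hence is nearly a straight segment, so $\tfrac{x_n-y_n}{|x_n-y_n|}$ lies within $o(1)$ of $T_{y_n}\Sigma _n$ and therefore within $o(1)$ of $T_{q_n}M_\infty (t+1)$; but $\pi (x_n)=\pi (y_n)$ forces $x_n-y_n$ to be orthogonal to $T_{q_n}M_\infty (t+1)$, which is impossible for $n$ large. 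If instead $d_{\l '_nM}(x_n,y_n)\geq c>0$ along a subsequence, I choose minimizing geodesics of $\l '_nM$ from $\vec 0$ to $x_n$ and to $y_n$, both of length $\leq t$ and hence inside the region of bounded curvature, and pass to the limit --- the pointed convergence at $\vec 0$ being smooth, the injectivity radius of $\l '_nM$ at $\vec 0$ staying bounded away from $0$ because $\l '_nM$ is uniformly $C^2$-close near $\vec 0$ to the smooth surface $M_\infty $ --- obtaining points $x_\infty \neq y_\infty $ of $M_\infty $ with $d_{M_\infty }(x_\infty ,y_\infty )=\lim d_{\l '_nM}(x_n,y_n)\geq c$; yet $q_n=\pi (x_n)=\pi (y_n)$ converges in $\R^3$ to the common image of $x_\infty $ and $y_\infty $, contradicting that $M_\infty $ is embedded. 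Both cases being impossible, $\pi |_{\Sigma _n}$ is injective for $n$ large, so $\Sigma _n$ is a normal graph over $\pi (\Sigma _n)\subset M_\infty (t+1)$ whose graph function and gradient tend to $0$ by the $C^1$-estimates --- which is the assertion of the lemma.
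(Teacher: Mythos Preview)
Your reduction to the injectivity of $\pi|_{\Sigma_n}$ and your Case~1 are fine, and they parallel the paper's set-up. The gap is in Case~2. You claim that passing the minimizing geodesics to the limit yields points $x_\infty\neq y_\infty$ of $M_\infty$ with
\[
d_{M_\infty}(x_\infty,y_\infty)=\lim_n d_{\l'_nM}(x_n,y_n)\geq c,
\]
and that this contradicts the embeddedness of $M_\infty$. But $x_n$ and $y_n$ lie on the same normal fiber $\pi^{-1}(q_n)$ with heights tending to $0$, so \emph{in $\R^3$} both sequences converge to the single point $q_\infty=\lim q_n\in M_\infty$; since $M_\infty$ is embedded, their limits coincide as points of $M_\infty$ and $d_{M_\infty}(x_\infty,y_\infty)=0$. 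The displayed equality is precisely what fails under multiplicity $>1$ convergence: the pointed intrinsic (Cheeger--Gromov) limit of the $\Sigma_n$ is in general not $M_\infty$ but only an abstract surface equipped with a local isometry onto a piece of $M_\infty$, and that local isometry can be non-injective without contradicting the embeddedness of $M_\infty$ (think of lifting to a cover when $M_\infty$ is not simply connected). In short, the assertion that intrinsic distances in $\Sigma_n$ converge to intrinsic distances in $M_\infty$ is equivalent to the multiplicity-one convergence you are trying to prove, so the argument is circular.

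What is missing is the one non-topological input the paper uses: $M_\infty$ is \emph{non-flat}, hence its universal cover is unstable. The paper's proof exploits this as follows. From two points with the same projection one obtains, for each $n$, two disjoint graphical sheets over a small disk in $M_\infty$; lifting to the universal cover $\wt{M}_\infty$ and using the uniform curvature bound, these sheets extend as disjoint graphs $u_{1,n},u_{2,n}$ over an intrinsic ball of any prescribed radius $t+R$. The normalized difference $(u_{1,n}-u_{2,n})/(u_{1,n}-u_{2,n})(\wt{q}_\infty)$ then subconverges to a positive Jacobi function on that ball, forcing stability of an arbitrarily large disk in $\wt{M}_\infty$ and contradicting the instability of $\wt{M}_\infty$. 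Embeddedness of $M_\infty$ alone does not suffice; you need to feed in the non-flatness via this Jacobi-function argument (or an equivalent stability argument, as in Lemma~\ref{lemma4.2}).
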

\begin{proof}
Let $\pi \colon \wt{M}_{\infty }\to M_{\infty }$ be the universal cover of $M_{\infty }$.
Choose a point $\wt{x}\in \pi ^{-1}(\{ \vec{0}\} )$.
Since $M_{\infty }$ is not flat, then $\wt{M}_{\infty }$ is not stable~\cite{cp1,fs1,po1} and thus,
there exists $R>0$ such that the intrinsic ball $B_{\wt{M}_{\infty }}(\wt{x},R)$
centered at $\wt{x}$ with radius $R$ is unstable.

Choose $t>0$. Since the closure $\overline{M}_{\infty }(t)$ of $M_{\infty }(t)$ is compact
(because $M_{\infty }(t)\subset \B (t)$),
then there exists $\de _t>0$ such that $\overline{M}_{\infty }(t)$ admits a regular
neighborhood $U(t)\subset \R^3$ of radius $\de _t>0$;
in particular, $U(t)$ is diffeomorphic to $\overline{M}_{\infty }(t)\times (-\de _t,\de _t)$
and we have a related normal projection $\Pi _t\colon U(t)\to \overline{M}_{\infty }(t)$.
Let $\pi _t\colon \wt{M}_{\infty }(t)\to \overline{M}_{\infty }(t)$ be the universal cover of
$\overline{M}_{\infty }(t)$ and let
$\pi _t\times \mbox{Id}\colon \wt{U}(t)\equiv \wt{M}_{\infty }(t)\times
(-\de _t,\de _t)\to \overline{M}_{\infty }(t)\times (-\de _t,\de _t)$ be the universal cover of $U(t)$,
each one endowed with the pulled back metric.
Therefore, we also have a
normal projection $\wt{\Pi } _t\colon \wt{U}(t)\to \wt{M}_{\infty }(t)$
such that $\pi _t\circ \wt{\Pi }_{t}=\Pi _{t}\circ (\pi _t\times \mbox{Id})$.

Since the sequence $\{ \l '_nB_M(p'_n,\frac{t}{\l '_n})\} _n$ converges to $M_{\infty }(t)$ as $n\to \infty $,
there exists $n_0=n_0(t)\in \N$ such that for every $n\geq n_0$, we have
$\l '_nB_M(p'_n,\frac{t}{\l '_n})\subset U(t+1)$, which clearly implies
the first sentence in the statement
of Lemma~\ref{lemma3.1}. To prove the second statement, we argue by contradiction. Suppose
that for some $t>0$, there exists a sequence of integer numbers $n(m)\geq n_0(t)$ tending
to $\infty $ such that
for each $m$, $\l '_{n(m)}B_M(p'_{n(m)},\frac{t}{\l '_{n(m)}})$ fails to be a normal
graph over its projection to $M_{\infty }(t+1)$. To keep the notation simple, we will
relabel $n(m)$ as $n$. This means that for each $n\geq n_0(t)$, there exist two distinct points
$q_n(1),q_n(2)\in \l '_{n}B_M(p'_{n},\frac{t}{\l '_{n}})$ such that
$\Pi _{t+1}(q_n(1))=\Pi _{t+1}(q_n(2))$. As the sequence
$\{ \Pi _{t+1}(q_n(1))\} _{n\geq n_0(t)}$ lies in the compact set $\overline{M}_{\infty }(t+1)$, after
extracting a subsequence we may assume that the $\Pi _{t+1}(q_n(1))=\Pi _{t+1}(q_n(2))$
converge as $n\to \infty $ to a point $q_{\infty }\in \overline{M}_{\infty }(t)$. Therefore, there
exists some $\ve =\ve (t)>0$ small such that for each $n\geq n_0(t)$, $\l _n'B_M(p_n',\frac{t+1}{\l _n'})$
contains two disjoint disks  $D_1(n),D_2(n)$ in such a way that
\begin{equation}
\label{eq:smalldisks}
(\Pi _{t+1})|_{D_i(n)}\colon D_i(n)\to B_{M_{\infty }}(q_{\infty },\ve )\quad
\mbox{is a diffeomorphism, }i=1,2,
\end{equation}
where $B_{M_{\infty }}(q_{\infty },\ve )\subset M_{\infty }(t+1)$ denotes the geodesic disk in $M_{\infty }$ centered at $q_{\infty }$ with radius $\ve $. Consider the universal covering
\[
\pi =\pi _{t+R+2}\colon \wt{M}_{\infty }(t+R+2)\to \overline{M}_{\infty }(t+R+2).
\]

Choose a point $\wt{q}_{\infty }\in \pi ^{-1}(\{ q_{\infty }\} )\subset \wt{M}_{\infty }(t+R+2)$ such that the distance from $\wt{x}$ to $\wt{q}$ is less than or equal to $t$ (this can be done since $q_{\infty }\in
\overline{B}_{M_{\infty }}(\vec{0},t)$).
 We will find the desired contradiction
by constructing a positive Jacobi function on the closed intrinsic ball
$\overline{B}_{\wt{M}_{\infty }(t+R+2)}(\wt{q}_{\infty },t+R)$,
which is impossible since this last closed ball contains
the unstable domain $B_{\wt{M}_{\infty }}(\wt{x},R)$ by the triangle inequality.

Take $n_0=n_0(t)$ large such that for all $n\geq n_0$, $\l _n'\overline{B}_M(p_n',\frac{t+R+1}{\l '_n})$
lies in $U(t+R+2)$. Hence, we can lift $\l _n'\overline{B}_M(p_n',\frac{t+R+1}{\l '_n})$ to
$\widetilde{U}(t+R+2)$ via the covering $\pi \times \mbox{Id}$.
Note that
\[
V_n(t,R):=(\pi \times \mbox{Id})^{-1}\left[ \l _n'
B_M(p_n',{\textstyle \frac{t+R+1}{\l '_n}})\right]
\]
 is a possibly disconnected, non-compact surface in $\widetilde{U}(t+R+2)$.
As $B_{M_{\infty }}(q_{\infty },\ve )\subset M_{\infty }(t+1)$ is a disk
and $\pi $ is a Riemannian covering, then
$B_{M_{\infty }}(q_{\infty },\ve )$ lifts to the geodesic disk $B_{\wt{M}_{\infty }(t+R+2)}(\wt{q}_{\infty },\ve )$
in $\wt{M}_{\infty }(t+R+2)$ with center $\wt{q}_{\infty }$ and radius $\ve $. Using
(\ref{eq:smalldisks}), we can lift $D_1(n)$, $D_2(n)$ to small disks $\wt{D}_1(n),
\wt{D}_2(n)\subset V_n(t,R)$ such that
\begin{equation}
\label{eq:smalldisks1}
(\wt{\Pi }_{t+R+2})|_{\wt{D}_i(n)}\colon \wt{D}_i(n)\to B_{\wt{M}_{\infty }(t+R+2)}(\wt{q}_{\infty },\ve )\quad
\mbox{is a diffeomorphism, }i=1,2.
\end{equation}

As the closed intrinsic metric ball $\overline{B}_{\wt{M}_{\infty }(t+R+2)}(\wt{q}_{\infty },R)$ is compact
and the sequence of geodesic disks $\{ \l _n'B_M(p_n',\frac{t+R+1}{\l _n'})\} _n$ converges smoothly
to $M_{\infty }(t+R+1)\subset M_{\infty }(t+R+2)$, then given $\mu >0$ small there exists $n_1=n_1(t,\mu )
\in \N $ (we may assume $n_1\geq n_0(t)$) such that for every $n\geq n_1$, the normal projection
$\wt{\Pi }_{t+R+2}$ restricts
to $V_n(t,R)$ as a {\it $\mu $-quasi-isometry,} in the sense that the ratio between the length of
tangent vectors at points in $V_n(t,R)$ and their images through the differential of $\wt{\Pi }_{t+R+2}$
lies in the range $[1-\mu ,1+\mu ]$. In particular, every radial geodesic arc $\g \subset
\overline{B}_{\wt{M}_{\infty }(t+R+2)}(\wt{q}_{\infty },t+R)$ starting at $\wt{q}_{\infty }$ and ending at
$\partial \overline{B}_{\wt{M}_{\infty }(t+R+2)}(\wt{q}_{\infty },t+R)$ can be uniquely lifted to a pair of
disjoint embedded arcs $\g _1(n),\g _n(n)\subset V_n(t,R)$ starting at the points $\wh{q}_i(\infty )
:=\left( \wt{\Pi }_{t+R+2}|_{\wt{D}_i(n)}\right) ^{-1}(\wt{q}_{\infty })\in \wt{D}_i(n)$, and these arcs have length
less than or equal to $t+R+1$, for $i=1,2$. When $\g $ varies in the set of radial geodesic arcs
starting at $\wt{q}_{\infty }$ and ending at $\partial \overline{B}_{\wt{M}_{\infty }(t+R+2)}(\wt{q}_{\infty },t+R)$,
the union of the related lifted arcs $\g _1(n)$, $\g _2(n)$ give rise to closed, disjoint topological disks
$\overline{D}_{1,R}(n),\overline{D}_{2,R}(n)\subset V_n(t,R)$, with the property that
\begin{equation}
\label{eq:quasiisom}
\wt{\Pi }_{t+R+2}|_{\overline{D}_{i,R}(n)}\colon \overline{D}_{i,R}(n)\to
\overline{B}_{\wt{M}_{\infty }(t+R+2)}(\wt{q}_{\infty },t+R)\quad
\mbox{is a $\mu $-quasi-isometry, }i=1,2.
\end{equation}

Property (\ref{eq:quasiisom}) implies that $\overline{D}_{i,R}(n)$ can be expressed as the graph
of a function
\[
u_{i,n}\colon \overline{B}_{\wt{M}_{\infty }(t+R+2)}(\wt{q}_{\infty },t+R)\to \R ,\quad i=1,2.
\]
As the sequence of disks $\{ \overline{D}_{i,R}(n)\} _n$ converges as $n\to \infty $ to
$\overline{B}_{\wt{M}_{\infty }(t+R+2)}(\wt{q}_{\infty },t+R)$ for $i=1,2$, then a
subsequence of the functions $\frac{u_{1,n}-u_{2,n}}{(u_{1,n}-u_{2,n})(\wt{q}_{\infty })}$
converges as $n\to \infty $ a non-zero Jacobi function
on $\overline{B}_{\wt{M}_{\infty }(t+R+2)}(\wt{q}_{\infty },t+R)$, which has non-zero sign since
$\overline{D}_{1,R}(n)$, $\overline{D}_{2,R}(n)$ are disjoint. This contradicts the unstability
of $\overline{B}_{\wt{M}_{\infty }(t+R+2)}(\wt{q}_{\infty },t+R)$ and finishes the proof.
\end{proof}

\begin{lemma}
\label{lemma3.2}
For all $R>0$, there exist $t>0$ and
$k\in \N$ such that if $m\geq k$, then the component of
$\left[ \l '_mB_M(p'_m,\frac{t}{\l '_m})\right]
\cap \overline{\B }(R)$ that passes through $\vec{0}$ is compact
has its boundary on $\esf ^2(R)$.
\end{lemma}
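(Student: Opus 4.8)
The plan is to reduce the statement to the following claim: once $t$ is chosen large enough in terms of $R$, the connected component of $\l'_mB_M(p'_m,\frac{t}{\l'_m})\cap\overline{\B}(R)$ through $\vec{0}$ does not reach the intrinsic boundary $\l'_m\partial B_M(p'_m,\frac{t}{\l'_m})$ for all large $m$. Write $S_m=\l'_mB_M(p'_m,\frac{t}{\l'_m})$ and $\partial S_m=\l'_m\partial B_M(p'_m,\frac{t}{\l'_m})$, and let $C_m$ be the component of $S_m\cap\overline{\B}(R)$ containing $\vec{0}$ (well defined for $m$ large, since $\l'_m\to\infty$ makes the rescaled ambient chart around $p'_m$ contain, up to small error, an isometric copy of $\overline{\B}(R)$). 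Since $M$ is complete, $\l'_m\overline{B}_M(p'_m,\frac{t}{\l'_m})$ is compact by Hopf--Rinow, hence so is the closure $\overline{C_m}$ in $\R^3$. If $\overline{C_m}\cap\partial S_m=\emptyset$, then $\overline{C_m}\subset S_m\cap\overline{\B}(R)$; being connected and containing $C_m$ it equals $C_m$, so $C_m$ is compact, and since $C_m\subset\overline{\B}(R)$ avoids $\partial S_m$, its frontier in $S_m$ is contained in $\esf^2(R)$, which is the stated conclusion. Thus it suffices to produce $t>0$ and $k\in\N$ with $\overline{C_m}\cap\partial S_m=\emptyset$ for all $m\ge k$.

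To choose $t$: the limit surface $M_{\infty}$ constructed above has bounded Gaussian curvature, hence is properly embedded in $\R^3$ by~\cite{mr8}, so $M_{\infty}\cap\overline{\B}(R+1)$ is compact. Moreover $M_{\infty}$ is connected: by Lemma~\ref{lemma3.1} each $M_{\infty}(s)$ is a multiplicity-one normal graph limit of the connected surfaces $\l'_n\overline{B}_M(p'_n,\frac{s}{\l'_n})$, hence connected, and it contains $\vec{0}$; as $M_{\infty}=\bigcup_{s\ge1}M_{\infty}(s)$, it is connected. Therefore $d_{M_{\infty}}(\vec{0},\cdot)$ is a finite, continuous function on $M_{\infty}$, and
\[
A:=\sup\{\,d_{M_{\infty}}(\vec{0},x)\ :\ x\in M_{\infty}\cap\overline{\B}(R+1)\,\}<\infty .
\]
Fix $t:=A+2$.

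The crux is the pair of inclusions
\[
B_{M_{\infty}}(\vec{0},t)\ \subset\ M_{\infty}(t)\ \subset\ \overline{B}_{M_{\infty}}(\vec{0},t),
\]
which force the frontier $\partial M_{\infty}(t)$ of $M_{\infty}(t)$ into the intrinsic distance sphere $\{x\in M_{\infty}:d_{M_{\infty}}(\vec{0},x)=t\}$. For the left inclusion: given $z$ with $d_{M_{\infty}}(\vec{0},z)<t$, a minimizing geodesic from $\vec{0}$ to $z$ is a compact subset of some $M_{\infty}(s_0)$, so by Lemma~\ref{lemma3.1} (applied with parameter $s_0$) it lifts through the normal graphs to paths in $\l'_n\overline{B}_M(p'_n,\frac{s_0}{\l'_n})$ of length converging to $d_{M_{\infty}}(\vec{0},z)$ that join $\vec{0}$ to points converging to $z$; for $n$ large these endpoints lie in $S_n$, so $z\in M_{\infty}(t)$. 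For the right inclusion one transports minimizing geodesics of points of $M_{\infty}(t)$ back through the corresponding graphs. Now suppose the claim of the first paragraph fails for this $t$: then, along a subsequence, there are points $q_m\in\overline{C_m}\cap\partial S_m\subset\overline{\B}(R)$. By Lemma~\ref{lemma3.1}, $S_m$ is a normal graph over its projection $P_m\subset M_{\infty}(t+1)$, with $\overline{P_m}\to\overline{M_{\infty}(t)}$ and $\partial P_m\to\partial M_{\infty}(t)$; projecting $q_m$ and passing to a further subsequence yields $q_m\to q_{\infty}\in\partial M_{\infty}(t)$ with $|q_{\infty}|\le R$. Then $q_{\infty}\in M_{\infty}\cap\overline{\B}(R+1)$ and $d_{M_{\infty}}(\vec{0},q_{\infty})=t=A+2>A$, contradicting the definition of $A$. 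Hence $\overline{C_m}\cap\partial S_m=\emptyset$ for all large $m$, and any index $k$ beyond which this holds, together with $t=A+2$, proves the lemma.

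I expect the main obstacle to be exactly the displayed pair of inclusions and the identification of $\partial M_{\infty}(t)$ with an intrinsic distance sphere of $M_{\infty}$: a priori the convergence $\l'_n\overline{B}_M(p'_n,\frac{t}{\l'_n})\to\overline{M_{\infty}(t)}$ may occur with high (or infinite) multiplicity, and it is precisely at this step that Lemma~\ref{lemma3.1} is indispensable, since it upgrades the convergence to multiplicity-one normal graphs, along which minimizing geodesics — and hence the intrinsic metric — can be carried both to and from the limit.
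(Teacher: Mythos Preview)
Your proof is correct and follows essentially the same route as the paper's: choose $t$ larger than the intrinsic $M_{\infty}$-diameter of the compact set $M_{\infty}\cap\overline{\B}(R+1)$ (using properness and connectedness of $M_{\infty}$), then invoke the multiplicity-one normal-graph convergence of Lemma~\ref{lemma3.1} to transport this bound back to the approximating surfaces $S_m$. The paper organizes the endgame slightly differently---it pulls back the extrinsic domain $\Delta(R+\ve)=M_{\infty}\cap\overline{\B}(R+\ve)$ through the normal projection to obtain compact $\Omega(m)\subset S_m$ with $\partial\Omega(m)$ converging into $\esf^2(R+\ve)$, rather than pushing $\partial S_m$ forward to the intrinsic sphere $\{d_{M_{\infty}}(\vec{0},\cdot)=t\}$ as you do---but both arguments rest on the same identification $M_{\infty}(t)\approx B_{M_{\infty}}(\vec{0},t)$, which the paper also uses when it asserts that $W_m(R_1+1)\to B_{M_{\infty}}(\vec{0},R_1+1)$.
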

\begin{proof}
We fix $R>0$ and suppose that $M_{\infty }$ intersects transversely the sphere $\esf ^2(R)$
(this transversality property holds for almost every $R$; we will prove the lemma assuming
this transversality property, and the lemma will hold for every $R$ after a continuity
argument). As $M_{\infty }$ intersects transversely $\esf ^2(R)$, there exists
$\ve =\ve (R)>0$ small such that $M_{\infty }$ intersects transversely $\esf ^2(R')$
for all $R'\in [R,R+\ve ]$. Given $R' \in [R,R+\ve ]$, let $\Delta (R')$ be the
component of $M_{\infty }\cap \overline{\B }(R')$ that contains $\vec{0}$. Note that $\Delta (R+\ve )$ is
compact and contained in the interior of some intrinsic geodesic disk $B_{M_{\infty }}(\vec{0},R_1)$,
$R_1=R_1(R)\geq R+\ve $. Also note that $B_{M_{\infty }}(\vec{0},R_1)$ is the limit as $n\to \infty $ of the
intrinsic geodesic disks $\l _n'B_{M}(p_n',\frac{R_1}{\l _n'})$.

Applying Lemma~\ref{lemma3.1} to $t=R_1+1$, we obtain an integer number $k=k(R)$ such that for each $m\geq k$,
the surface $\l '_mB_M(p'_m,\frac{R_1+1}{\l '_m})$ is  contained in a small regular neighborhood $U(R_1+2)$
of radius $\de (R_1+2)>0$ of $M_{\infty }(R_1+2)$ and $\l '_mB_M(p'_m,\frac{R_1+1}{\l '_n})$ is a small normal
graph over its projection $W_m(R_1+1)$ to $M_{\infty }(R_1+2)$, i.e.,
\[
(\Pi _{R_1+2})|_{\l '_mB_M\left( p'_m,{\textstyle \frac{R_1+1}{\l '_m}}\right)}\colon
\l '_mB_M(p'_m,{\textstyle \frac{R_1+1}{\l '_m}})\to W_m(R_1+1)
\quad \mbox{is a diffeomorphism, }\forall m\geq k.
\]
Here we are using the same notation as in the proof of Lemma~\ref{lemma3.1}; therefore,
$\Pi _{R_1+2}\colon U(R_1+2)\to M_{\infty }(R_1+2)$ is the normal projection associated to
the regular neighborhood $U(R_1+2)$
of $M_{\infty }(R_1+2)$.

Note that as $\Delta (R+\ve )
\subset B_{M_{\infty }}(\vec{0},R_1)$, then there exists $k_1\geq k$
such that $\Delta (R+\ve )\subset
W_m(R_1+1)$, $\forall m\geq k_1$. As the distance between $\partial B_{M_{\infty }}(\vec{0},R_1)$
and $\partial B_{M_{\infty }}(\vec{0},R_1+1)$ is 1, and $W_m(R_1+1)$ converges to
$B_{M_{\infty }}(\vec{0},R_1+1)$ as $m\to \infty $, then we assume that for $m$ sufficiently large,
$B_{M_{\infty }}(\vec{0},R_1)$ lies in $W_m(R_1+1)$.
Since $\Delta (R+\ve )\subset  B_{M_{\infty }}(\vec{0},R_1)$, then we conclude
that $\Delta (R+\ve )\subset W_m(R_1+1)$ for all $m$ sufficiently large.
This implies that the compact domains
\[
\Omega (m):=\left( (\Pi _{R_1+2})|_{\l _m'B_M\left( p'_m,\frac{R_1+1}{\l _m'}\right) }\right) ^{-1}
\left[ \Delta (R+\ve )\right]
\subset \l _m'B_M(p'_m,{\textstyle \frac{R_1+1}{\l _m'}})
\]
converge smoothly to $\Delta (R+\ve )$ as $m\to \infty $. In particular, the sequence of boundaries
$\partial \Omega (m)$
converge to the boundary of $\Delta (R+\ve )$, which is contained in the sphere $\esf^2(R+\ve )$.
This implies that for $m$ large, $\partial \Omega (m)$ lies outside $\overline{\B }(R)$.
Now the lemma follows by taking $t=R_1+1$, as the component of
$\left[ \l '_mB_M(p'_m,\frac{t}{\l '_m})\right]
\cap \overline{\B }(R)$ passing through $\vec{0}$ coincides with the component of $\Omega (m)\cap \overline{\B }(R)$ passing through $\vec{0}$.
\end{proof}

\begin{remark}
  {\rm
Note that Lemmas~\ref{lemma3.1} and~\ref{lemma3.2} remain valid under the
hypotheses that $M_{\infty }$ is not flat and the Gaussian curvature of
$\{ \l _nB_M(p_n,\frac{t}{\l _n})\} _n$ is uniformly bounded for each $t>0$
(with the bound depending on $t$).
}
\end{remark}

We next finish the proof of Theorem~\ref{thm3introd}.
Applying Lemma~\ref{lemma3.2} to $R_n=\l '_n$, we obtain $t(n)>0$
and $k(n)\in \N $ such that if $M_n$
denotes the component of $B_M\left( p'_{k(n)},\frac{t(n)}{\l '_{k(n)}}\right)
\cap  \overline{B}_N\left( p'_{k(n)},\frac{\l '_n}{\l '_{k(n)}}\right) $
that contains $p'_{k(n)}$, then $M_n$ is compact and
its boundary satisfies $\partial M_n\subset \partial B_N\left(
 p'_{k(n)},\frac{\l '_n}{\l '_{k(n)}}\right)$.
Clearly this compactness property remains valid if we
increase the value of $k(n)$. Hence, we may assume
without loss of generality that
\[
t(n)(n+1)<k(n)\ \mbox{ for all }n, \qquad
\frac{\l '_n}{\l '_{k(n)}}\to 0\ \mbox{ as }
n\to \infty .
\]

We now define $p_n=p'_{k(n)}$, $\ve _n= \frac{\l '_n}{\l '_{k(n)}}$
and $\l _n=\l '_{k(n)}$. Then it is easy
to check that the $p_n,\ve _n,\l _n$ and $M_n$ satisfy
the conclusions stated in Theorem~\ref{thm3introd} (in
order to prove item~{2} in the statement
of Theorem~\ref{thm3introd}, simply note that equations
(\ref{eq:*}) and (\ref{eq:**}) imply that
$\frac{|\sigma _{M_n}|}{\l _n}=
\frac{|\sigma _{M_n}|}{\l '_{k(n)}}\leq
   1+\frac{t(n)}{k(n)-t(n)}<1+\frac{1}{n}$, where the last inequality
follows from $t(n)(n+1)<k(n)$). This finishes the proof of
Theorem~\ref{thm3introd}.
{\hfill\penalty10000\raisebox{-.09em}{$\Box$}\par\medskip}
\par
\vspace{.2cm}

\begin{remark}
{\rm
If the surface $M\subset N$ in Theorem~\ref{thm3introd}
were properly embedded, then the arguments needed to
carry out its proof could be formulated in a more
standard manner by using the techniques developed in
the papers~\cite{mpr3,mr8}. It is the non-properness of
$M$ that necessitates being more careful
in defining the limit surface $M_{\infty}$ and in proving additional
properties of how it arises as a limit surface of compact embedded minimal surfaces
that appear in the blow-up procedure in the proof of Theorem~\ref{thm3introd}.
}
\end{remark}

\section{Applications of Theorem~\ref{thm3introd}.}

In any flat three-torus $\Te ^3$, there exists a
sequence $\{ M_n\} _n$ of embedded, compact minimal surfaces of genus three,
such that the areas of these surfaces diverge to infinity~\cite{me6}
(a similar result holds
for any genus $g\geq 3$, see Traizet~\cite{tra5}). After choosing a
subsequence, these surfaces converge to a minimal foliation of
$\Te^3$ and the convergence is smooth away from two points. Since by
the Gauss-Bonnet formula, these surfaces have absolute total
curvature $8\pi $, this example demonstrates a special case of Theorem~\ref{corol5.4}
below. Before stating this result, we
recall a somewhat standard result concerning limits of minimal surfaces. A similar
statement can be found in item~5 of Lemma A.1 in Meeks and Rosenberg~\cite{mr13}.
\begin{lemma}
\label{lemma4.2}
  Suppose that $\{ M_n\} _n$ is a sequence of complete embedded minimal surfaces
  in a Riemannian three-manifold $N$, which converge to minimal lamination ${\cal L}$
of $N$. Let $L$ be a leaf of ${\cal L}$ which is either a limit leaf of ${\cal L}$
or it is an isolated leaf and in this case, the convergence of the sequence $\{ M_n\} _n$
to $L$ has multiplicity greater than 1. Then, the two-sided cover of $L$ is stable.
\end{lemma}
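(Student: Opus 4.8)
The plan is to reduce the lemma to producing a positive solution of the Jacobi equation on the two-sided cover $\pi\colon\widetilde L\to L$ of $L$, because a two-sided minimal surface admitting a positive Jacobi function is stable, by the classical characterization of stability of Fischer-Colbrie and Schoen~\cite{fs1}. Since $\widetilde L$ is two-sided it carries a globally defined unit normal in $N$; fix a point $p_0\in L$, a lift $\widetilde p_0\in\widetilde L$, and an exhaustion of $\widetilde L$ by smooth, compact, connected domains $K_1\subset K_2\subset\cdots$ with $\widetilde p_0\in K_1$ and each $K_j$ contained in the interior of $K_{j+1}$. In both situations allowed by the hypothesis I will exhibit, over each $K_j$, two disjoint minimal graphs whose signed graphing functions $a_{\ell,j}>b_{\ell,j}$ tend to $0$ in $C^2(K_j)$ as $\ell\to\infty$, and then pass to the limit in the normalized differences $(a_{\ell,j}-b_{\ell,j})/(a_{\ell,j}-b_{\ell,j})(\widetilde p_0)$, exactly as in the proof of Lemma~\ref{lemma3.1}.

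\emph{Case 1: $L$ is a limit leaf of ${\cal L}$.} Then there is a sequence of pairwise distinct leaves $L_k$ of ${\cal L}$ accumulating on $L$. Using the local product structure of ${\cal L}$ (Definition~\ref{deflamination}) together with interior elliptic estimates for minimal graphs, for each $j$ and all $k$ large the lift of $L_k$ to a tubular neighborhood of $\widetilde L$ in $N$ contains a normal graph over $K_j$ whose graphing function $v_{k,j}$ tends to $0$ in $C^2(K_j)$; since distinct leaves of ${\cal L}$ are disjoint these graphs are ordered, and after reversing the global normal of $\widetilde L$ if necessary we may assume $v_{k,j}>0$. I take $a_{k,j}=v_{k,j}$ and $b_{k,j}\equiv 0$ (the zero section, i.e.\ $L$ itself). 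Now $v_{k,j}$ is a positive solution of the linear, uniformly elliptic equation obtained by subtracting the minimal-graph operators evaluated at $v_{k,j}$ and at $0$, whose coefficients converge as $k\to\infty$ to those of the Jacobi operator of $\widetilde L$; by the Harnack inequality the normalized functions $w_{k,j}=v_{k,j}/v_{k,j}(\widetilde p_0)$ are locally uniformly bounded on $K_j$, and by Schauder estimates a subsequence converges in $C^2(K_j)$ to a nonnegative Jacobi function $w_j$ on $K_j$ with $w_j(\widetilde p_0)=1$. Extracting nested subsequences as $j$ increases and diagonalizing produces a global nonnegative Jacobi function $w$ on $\widetilde L$ with $w(\widetilde p_0)=1$; by the maximum principle (Harnack) a nonnegative solution of the Jacobi equation that vanishes at an interior point vanishes identically, so $w>0$ on $\widetilde L$, which gives the stability of $\widetilde L$.

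\emph{Case 2: $L$ is isolated and the convergence $M_n\to L$ has multiplicity $m>1$.} Fix a neighborhood $W$ of $L$ in $N$ with $W\cap{\cal L}=L$. Because the convergence of the $M_n$ to $L$ inside $W$ has multiplicity $m>1$ (possibly $m=\infty$), for each $j$ and all $n$ large the part of $M_n$ in $W$ contains, after lifting to a tubular neighborhood of $\widetilde L$, at least two pairwise disjoint normal graphs over $K_j$; let $a_{n,j}$ and $b_{n,j}$ be the signed graphing functions of the outermost and innermost of these, so that $a_{n,j}>b_{n,j}$ on $K_j$ since $M_n$ is embedded, and $a_{n,j},b_{n,j}\to 0$ in $C^2(K_j)$ since the whole family of sheets collapses onto $L$. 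Then $a_{n,j}-b_{n,j}>0$ is again a positive solution of a linear uniformly elliptic equation whose coefficients converge to the Jacobi operator, and the argument of Case~1 applies verbatim to the normalized differences, producing a positive Jacobi function on $\widetilde L$ and hence its stability.

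The step I expect to be the main obstacle is not the passage to the limit in the PDE, which is routine once one has the graphs and their $C^2$-smallness, but the geometric step of upgrading ``local graphs over small disks in $L$'' (all that the lamination structure provides directly) to ``graphs over the large compact sets $K_j$ in $\widetilde L$'': one must lift to the two-sided cover and control where the nearby leaves $L_k$ in Case~1, or the sheets of $M_n$ in Case~2, could fail to stay $C^0$-close to $\widetilde L$ over $K_j$, which uses that the other leaves of ${\cal L}$ keep away from $K_j$ (trivially so in Case~2, where $W\cap{\cal L}=L$) together with an elliptic bootstrap from $C^0$-closeness to $C^2$-closeness. A minor secondary point is the coherence of the diagonal limit along the exhaustion, which is arranged by always choosing the subsequence at stage $j+1$ inside the one chosen at stage $j$, so that $w_{j+1}|_{K_j}=w_j$ by uniqueness of limits.
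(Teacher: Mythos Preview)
Your argument is correct and follows essentially the same route as the paper's. For the limit-leaf case the paper simply cites~\cite{mpr18}, while you supply the direct proof (which is in fact the argument of~\cite{mpr18}); for the isolated-leaf case the two proofs coincide, the paper phrasing your ``outermost and innermost graphs'' as the highest and lowest intersection points of each normal geodesic with the closure lamination $\overline{M_n\cap D_\delta}$. One small imprecision worth fixing: in Case~1 a limit leaf need not have pairwise \emph{distinct} leaves $L_k$ accumulating on it---$L$ may accumulate only on itself---so you should work with nearby plaques of ${\cal L}$ rather than nearby leaves; after this adjustment your argument goes through unchanged.
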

\begin{proof}
  If $L$ is a limit leaf of ${\cal L}$, then the main theorem in~\cite{mpr18} insures
that the two-sided cover of $L$ is stable. Next suppose that $L$ is an isolated leaf of
${\cal L}$ and that the convergence of the $M_n$ to $L$ has multiplicity greater than 1.
Consider a compact subdomain $D\subset L$ and let $D_{\de }$ be a regular neighborhood
of $D$ in $N$ of small radius $\de >0$. After possibly lifting to a two-sheeted cover of $D_{\de }$,
we may assume that $D$ is two-sided.
Thus $D_{\de }$ is diffeomorphic to $D\times [-\de ,\de ]$.
Since $L$ is isolated as a leaf of ${\cal L}$, then the `top' and `bottom' sides
$D\times \{ -\de \}$ and $D\times \{ \de \} $ of $D_{\de }$ can be assumed to be disjoint from ${\cal L}$
and, since they are compact, they are also disjoint from the surfaces $M_n$ for $n$
sufficiently large. Another consequence of the convergence of the $M_n$ to $L$ and
of the compactness of $D$ is that we may assume that the $M_n\cap D_{\de }$ are locally graphs
over their projections to $D$. Consider the sequence of minimal laminations $\{ \overline{M_n\cap D_{\de }}\} _n$,
which converges to $D$. Note that for $n$ large, each normal unit speed geodesic $\g _x$ in $D_{\de }$ starting at
a point $x\in D$ intersects the lamination $\overline{M_n\cap D_{\de }}$ in a closed set which has a highest
point $\g _x(t_n(x))$ and a lowest point $\g _x(s_n(x))$, for some real numbers $s_n(x)\leq t_n(x)$.
As the  multiplicity of the limit $M_n\to L$
is greater than one, then $s_n(x)<t_n(x)$ for each $n\in \N $.
Consider the function $u_n(x)=t_n(x)-s_n(x)$, for all $x\in D$.
Since the lamination $\overline{M_n\cap D_{\de }}$ is minimal for each $n\in \N$,
then after normalizing $u_n$ to be 1 at some point $p\in \mbox{Int}(D)$, a standard argument shows that
these normalized functions converge to a positive Jacobi function of $D$, which implies
that $D$ is stable. Finally, $L$ is stable as every compact subdomain $D\subset L$ is stable.
\end{proof}

\begin{theorem}
\label{corol5.4} Suppose $\{ M_n\} _n$ is a sequence of complete,
embedded minimal surfaces in a Riemannian three-manifold $N$, such
that  there exists a open covering of $N$ and $\int _{M_n\cap B}|\sigma _n|^2$
is uniformly bounded for any open set $B$ in this
covering (here $\sigma _n$ denotes the second fundamental form of $M_n$).
Then, there exists a subsequence of $\{ M_n\} _n$ that converges to
a minimal lamination ${\cal L}$ of $N$, and the singular
set of convergence of the $M_n$ to ${\cal L}$, defined as
\begin{equation}
\label{singsetconv}
\hspace{-.35cm}S({\cal L})=\left\{ \rule{0cm}{.4cm}p\in {\cal L}\ | \ \mbox{the sequence $\{ |\sigma _{M_n}| \} _n$
is not uniformly bounded in any neighborhood of $p$}\right\} ,
\end{equation}
is closed and discrete. Furthermore:
\begin{enumerate}
\item If $L$ is a
limit leaf of $\lc$ or a leaf with infinite multiplicity as a limit of the surfaces $M_n$,
then the two-sided cover of $L$ is stable and $L$ is totally geodesic.
\item If each $M_n$ is connected and
$N$ is compact, then ${\cal L}$ is compact and connected in the
subspace topology.
\end{enumerate}
\end{theorem}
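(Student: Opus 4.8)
The plan is to run the standard minimal‑lamination compactness machinery — extract a limit lamination off a controlled singular set, then remove the singularities — using the uniform local bound on $\int|\sigma_n|^2$ both to bound the singular set and, through an $\varepsilon$‑regularity estimate, to remove it; the refinements (1)--(2) will then follow by accounting for how much bending energy the sheets of $M_n$ carry. The one analytic ingredient not already in the excerpt is the Choi--Schoen $\varepsilon$‑regularity estimate: there are $\varepsilon_0,C>0$, locally uniform on $N$, and a scale $r_0>0$ such that any minimal surface $\Sigma$ with $y\in\Sigma$, $\partial\Sigma\cap B_N(y,r)=\emptyset$ for some $r\le r_0$, and $\int_{\Sigma\cap B_N(y,r)}|\sigma_\Sigma|^2<\varepsilon_0$ satisfies $|\sigma_\Sigma|^2(y)\le C/r^2$. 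Everything else is either in the excerpt (Theorem~\ref{tt2.2}, Lemma~\ref{lemma4.2}) or is by now routine.

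First I would introduce the Radon measures $\mu_n$ on $N$ obtained by pushing forward $|\sigma_n|^2$ times the area element of $M_n$. Since $\mu_n(B)\le\Lambda_B:=\sup_m\int_{M_m\cap B}|\sigma_m|^2<\infty$ for each $B$ of the covering, a diagonal argument over a countable subcover produces a subsequence with $\mu_n\rightharpoonup\mu$ for a locally finite Radon measure $\mu$. Put $S=\{p\in N:\mu(\{p\})\ge\varepsilon_0\}$; since $\#(S\cap B)\le\Lambda_B/\varepsilon_0$, the set $S$ is locally finite, hence closed and discrete in $N$. If $x\notin S$, pick $r\le r_0$ with $\mu(\overline{B}_N(x,r))<\varepsilon_0$; then $\int_{M_n\cap B_N(x,r)}|\sigma_n|^2<\varepsilon_0$ for $n$ large, and $\varepsilon$‑regularity at the points of $M_n\cap B_N(x,r/2)$ gives $\sup_{M_n\cap B_N(x,r/2)}|\sigma_n|^2\le 4C/r^2$ for $n$ large. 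Thus $\{|\sigma_{M_n}|\}_n$ is uniformly bounded near each point of $N\setminus S$ and near no point of $S$, so $S$ is exactly the singular set $S(\mathcal L)$ of (\ref{singsetconv}) once $\mathcal L$ is built. On $N\setminus S$, the uniform local curvature bounds together with embeddedness yield — by the by‑now standard arguments (uniform local graph decompositions, interior $C^k$ estimates, Arzel\`a--Ascoli, diagonalisation over a compact exhaustion of $N\setminus S$) — a further subsequence with $M_n\to\mathcal L'$ in $C^k_{\mathrm{loc}}(N\setminus S)$, where $\mathcal L'$ is a minimal lamination of $N\setminus S$.

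Next I would remove the singularities. Fix $p\in S$. Local finiteness of $\mu$ gives $\mu(\overline{B}_N(p,r)\setminus\{p\})\to0$ as $r\to0$, so for all small $s>0$ the annulus $A_s=\{q:d_N(p,q)\in[s,2s]\}$ has $\mu(A_s)<\varepsilon_0/2$, hence $\int_{M_n\cap A_s}|\sigma_n|^2<\varepsilon_0$ for $n$ large. Applying $\varepsilon$‑regularity at scale comparable to $d_N(p,\cdot)$ to points of $M_n$ well inside $A_s$, and then letting $n\to\infty$, yields a constant $C_0$ with $|\sigma_{\mathcal L'}|\,d_N(p,\cdot)\le C_0$ on $\mathcal L'$ in a punctured neighbourhood of $p$. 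The Local Removable Singularity Theorem~\ref{tt2.2} then extends $\mathcal L'$ to a minimal lamination of a full neighbourhood of $p$; performing this at every (discrete) point of $S$ produces a minimal lamination $\mathcal L$ of $N$ with $S(\mathcal L)=S$, to which the subsequence $\{M_n\}$ converges in the asserted sense. I expect this removable‑singularity step to be the main obstacle: it is precisely where the $L^2$‑hypothesis must be converted into the pointwise $C_0/d_N(p,\cdot)$ curvature bound demanded by Theorem~\ref{tt2.2}; without it the limit need not be a lamination of all of $N$.

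Finally I would establish (1) and (2). For (1): stability of the two‑sided cover of $L$ is immediate from Lemma~\ref{lemma4.2} — via \cite{mpr18} if $L$ is a limit leaf, and via the isolated‑leaf‑with‑multiplicity‑greater‑than‑one case of that lemma if $L$ occurs with infinite multiplicity but is not a limit leaf. To see that $L$ is totally geodesic, suppose not; then some small disk $W\subset L$ has $\int_W|\sigma_L|^2=c_0>0$, and we may take a tubular neighbourhood of $\overline W$ contained in a single covering set $B$. Whether $L$ is a limit leaf or occurs with infinite multiplicity, for every $m$ and all large $n$ the surface $M_n$ contains at least $m$ pairwise disjoint sheets inside $B$, each a small $C^k$‑graph over a region $C^k$‑close to $W$ and hence with $\int|\sigma_n|^2\ge c_0/2$; therefore $\int_{M_n\cap B}|\sigma_n|^2\ge mc_0/2$, contradicting $\int_{M_n\cap B}|\sigma_n|^2\le\Lambda_B$. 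Hence $|\sigma_L|\equiv0$. For (2): $\mathcal L$ is a closed subset of the compact manifold $N$, hence compact; and since $\mathcal L$ is the Hausdorff limit of $\{M_n\}$ (as $N$ is compact), for $n$ large the connected set $M_n$ lies in the $\eta$‑neighbourhood of $\mathcal L$ and meets the $\eta$‑neighbourhoods of every portion of $\mathcal L$, so a splitting $\mathcal L=A\sqcup B$ into nonempty disjoint closed sets would make the disjoint open $\eta$‑neighbourhoods of $A$ and $B$ (with $3\eta<d_N(A,B)$) disconnect $M_n$ — impossible. Thus $\mathcal L$ is connected in the subspace topology.
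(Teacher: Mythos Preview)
Your argument is correct and follows the same overall architecture as the paper's proof: isolate a discrete singular set by a curvature--concentration count, obtain a lamination on the complement, remove the singularities via Theorem~\ref{tt2.2}, and then handle items~1--2. The genuine difference is in the analytic engine you use to pass from the local $L^2$ bound on $|\sigma_n|^2$ to pointwise curvature bounds. You invoke the Choi--Schoen $\varepsilon$--regularity estimate (an external ingredient) together with weak compactness of the Radon measures $\mu_n=|\sigma_n|^2\,dA_{M_n}$, defining $S$ as the set of atoms of mass $\ge\varepsilon_0$; the paper instead applies its own Theorem~\ref{thm3introd} (the Local Picture Theorem) by blow--up, arguing that unbounded curvature at a non--bad point would produce a non--flat limit in $\R^3$ with total curvature $\ge 4\pi$, contradicting the $2\pi$ threshold defining bad points. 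These are contrapositive formulations of the same phenomenon, and both yield the linear bound $|\sigma_{\mathcal L'}|\,d_N(p,\cdot)\le C$ needed for Theorem~\ref{tt2.2}. Your route is the more standard measure--theoretic packaging and is slightly cleaner at the removable--singularity step; the paper's route has the virtue of being self--contained (it avoids quoting Choi--Schoen) and illustrates Theorem~\ref{thm3introd} in action. Your treatment of item~1 is essentially identical to the paper's, and you supply a proof of item~2 where the paper simply declares it straightforward; your connectedness argument via Hausdorff convergence of the $M_n$ to $\mathcal L$ is fine, though you might note explicitly that each $p\in S(\mathcal L)$ lies in $\mathcal L$ (this is built into the definition~(\ref{singsetconv}) but deserves a word after the removable--singularity extension).
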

\begin{proof}
We will distinguish between {\it good and bad points} of $N$, depending on whether
or not the surfaces $M_n$ have a good behavior around the point to take limits; the
set $A\subset N$ of bad points will be then proven to be discrete and closed in $N$, and
we will produce a limit minimal lamination ${\cal L}$ of the $\{ M_n\} _n$ in
$N-A$. The final step in the proof of the first statement in the theorem
will be to show that ${\cal L}$ extends as a minimal lamination across $A$.

Let $q$ be a point in $N$. We will say that $q$ is a {\it bad} point
for the sequence $\{ M_n\} _n$ if there exists a subsequence $\{
M_{n_k}\} _{k}\subset \{ M_n\} _n$ such that
\[
\int _{M_{n_k}\cap B_N(q,\frac{1}{k})}|\sigma _{M_{n_k}}|^2\geq 2\pi ,\quad \mbox{ for all
$k\in \N $.}
\]
First note that we can replace the covering in the statement by a
countable open covering of $N$ by balls $B _i$, $i\in \N $. Assume
for the moment that $B _1$ contains a bad point $q_1$ for $\{ M_n\}
_n$. We claim that $B_1$ has a finite number of bad points after
replacing $\{ M_n\} _n$ by a subsequence. To see this, since $q_1$
is a bad point for $\{ M_n\} _n$, there exists a subsequence $\{
M'_k=M_{n_k}\} _{k}\subset \{ M_n\} _n$ such that the total
curvature of every $M'_k$ in $B_N(q_1,\frac{1}{k})$ is at least
$2\pi $. Suppose that $q_2\in B _1$ is another  bad point for $\{ M'_k\}
_k$. Then we find a subsequence $\{ M''_j=M_{k_j}\} _{j}\subset \{
M'_k\} _k$ such that the total curvature of every $M''_j$ in
$B_N(q_2,\frac{1}{j})$ is at least $2\pi $. In particular, for $j$
large, there are disjoint neighborhoods of $q_1$ and $q_2$ in $B_1$,
each with total curvature of $M''_j$
at least $2\pi $. Since $\{ \int _{M_n\cap B_1}|\sigma _n|^2\} _n$ is uniformly
bounded, this process of finding bad points and subsequences in
$B _1$ stops after a finite number of steps, which proves our
claim. A standard diagonal argument then shows that after replacing
the $M_n$ by a subsequence, the set of bad points $A\subset N$ for
$\{ M_n\} _n$ is a discrete closed set in $N$.

Suppose that $q\in N-A$. We claim that $\{ M_n\} _n$ has pointwise bounded
second fundamental form in some neighborhood of $q$.
Arguing by contradiction, suppose there exist points $p_n\in M_n$
converging to $q$ and such that $|\sigma _{M_n}|(p_n)\to \infty $ as $n\to \infty $.
Let $\ve _q=\frac{1}{2}d_N(q,A)>0$. By the Local Picture Theorem on the Scale
of Curvature, there exists a blow-up limit of the $M_n$ that converges to a
{\it non-flat,} properly embedded minimal surface in $\R^3$; as  the
total curvature of a non-flat, complete minimal surface in $\R^3$ is at least $4\pi $,
and the $L^2$-norm of the second fundamental form is invariant under rescalings
of the ambient metric, then we may assume that for $n$ large,
\[
\int _{M_n\cap B_N(q,r_n)}|\sigma _n|^2>2\pi ,
\]
where $r_n\searrow 0$ satisfies $d_N(q,p_n)<r_n<\frac{\ve _q}{2}$.
This clearly contradicts that $q\in N-A$, and so, we conclude that
$\{ M_n\} _n$ has pointwise bounded
second fundamental form in some neighborhood $U_q$ of $q$.
Therefore, there exists a minimal lamination
${\cal L}_q$ of $U_q$ such that a subsequence of the $M_n$ converges as $n\to \infty $
to ${\cal L}_q$ in $U_q$.
Another standard diagonal argument proves that after extracting a
subsequence, the $M_n$ converge to a minimal lamination ${\cal L}$
of $N-A$.

Next we show that ${\cal L}$ extends across ${\cal A}$ to a minimal lamination
of $N$. Consider the second fundamental form $|\sigma _{\cal L}|$ of ${\cal L}$.
We claim that $|\sigma _{\cal L}|$ does
not grow faster than linearly at any point $q\in A$ in terms of
the inverse of the extrinsic distance function to $q$: otherwise,
there exists a sequence of blow-up points
$p_n\in {\cal L}$
converging to a point $q\in A$ with $|\sigma _{L_n}|(p_n)d_N(p_n,q)$
unbounded, where $L_n$ is the leaf of ${\cal L}$ passing through
$p_n$. Using again the Local Picture Theorem on the Scale of
Curvature, we deduce that there exist disjoint small neighborhoods
$V(p_n)$ of $p_n$ in $L_n$, such that
\[
\int _{V(p_n)}|\sigma _{L_n}|^2>2\pi ,\quad \mbox{ for all $n\in \N$.}
\]
Since $M_n$ converges to ${\cal
L}$, this contradicts the hypothesis that
$\int _{M_n\cap B}|\sigma _n|^2$ is uniformly bounded for the open
set $B$ in the covering which contains $q$.
Once we know that $|\sigma _{\cal L}|$
does not grow faster than linearly at any point of the discrete closed set
$A$, Theorem~\ref{tt2.2} implies that
${\cal L}$ extends across $A$ to a minimal lamination of $N$.
Observe that by construction, the singular set of convergence
$S({\cal L})$ defined in~(\ref{singsetconv}) coincides with the set
$A$ of bad points. This proves the first sentence of the theorem.

We next prove item~1. Let $L$ be a
limit leaf of ${\cal L}$. By the main result in~\cite{mpr18},
the two-sided cover of $L$ is stable. If $L$ is not totally geodesic,
then there exists $q\in L-S({\cal L})$ such that $|\sigma _L|(q)
=4\ve >0$ (recall that $S({\cal L})$ is closed and discrete).
Then, there exists some open set $U\subset [L-S({\cal L})]\cap B$
such that $|\sigma _L|\geq 2\ve $ in $U$, where $B$ is an
open set in the covering that appears in the statement of the theorem, such
that $q\in B$. As $L$ is a limit leaf of ${\cal L}$ and $L$ is
the limit in $N-S({\cal L})$ of the $M_n$, then
there exist pairwise disjoint domains $U_n\subset M_n\cap B$ such that
$|\sigma _n|\geq \ve $ in $U_n$ for all $n$. This clearly contradicts
that $\int _{M_n\cap B}|\sigma _n|$ is bounded. This proves item~1 of the
theorem when $L$ is a limit leaf of {\cal L}.

If $L$ is not a limit
leaf of ${\cal L}$ but the multiplicity of the limit $\{ M_n\} _n\to L$
is infinity, then Lemma~\ref{lemma4.2} insures that the two-sided cover of
$L-S({\cal L})$ is stable. Given a compact subdomain $D$ of the two-sided cover
$\wh{L}$ of
$L$, the fact that $S({\cal L})$ is closed and discrete implies that $D\cap S({\cal L})$
is finite. After enlarging slightly $D$, we can assume that
$D\cap S({\cal L})$ lies in the interior of $D$. As $\wh{L}-S({\cal L})$ is stable,
then $D-S({\cal L})$ is also stable. A standard argument in elliptic theory
then shows that $D$ is also stable, and thus $\wh{L}$ is also stable, as desired.
The arguments in the last paragraph to show that $L$ is totally geodesic can be adapted easily
to this case, since the multiplicity of the limit $\{ M_n\} _n\to L$
is infinity. Now item~1 of the theorem is proved.

The proof of item~2 of the theorem is straightforward and we leave it for the reader.
\end{proof}

\begin{remark}
{\rm
Under the hypotheses and notation of Theorem~\ref{corol5.4},
we cannot conclude that ${\cal L}$ is path-connected
in the subspace topology when $M$ is connected and $N$
is compact: 
 a counterexample can be
found for geodesic laminations in Example~3.5 of~\cite{mpr19},
and this example can be adapted to produce a minimal lamination
counterexample simply by taking products with $\esf^1$.
}
\end{remark}

We now give another application of Theorem~\ref{thm3introd}, which is
a partial positive answer to Conjecture~1.7 in~\cite{mpr13}.
Given a two-sided minimal surface
$M$ in a flat three-manifold $N$ and given $a>0$, we say that $M$ is
{\it $a$-stable} if for any compactly supported smooth function
$u\in C^\infty_0(M)$, we have
\begin{equation}\label{eq:a-stable1}
\int_{M}(|\nabla u|^2 + aKu^2) \geq 0,
\end{equation}
where $\nabla u$ stands for the gradient of $u$ and $K$ is the
Gaussian curvature of $M$ (the usual stability condition for the area
functional corresponds to the case $a=2$). More generally, we say
that $M$ has finite index of $a$-stability if there is a bound on
the number of negative eigenvalues (counted with multiplicities)
of the operator $\Delta -aK$ acting on smooth functions defined on
compact subdomains of $M$.

The authors conjectured in~\cite{mpr13} (Conjecture~1.7) that if $a>0$ and $M$ is a
two-sided, complete, embedded, $a$-stable minimal surface in a complete flat three-manifold
$N$, then $M$ is totally geodesic (flat). Do Carmo and Peng~\cite{cp1},
Fischer-Colbrie and Schoen~\cite{fs1} and Pogorelov~\cite{po1} proved independently
that if $M$ is a  complete, orientable $a$-stable minimal surface immersed in $\R^3$,
for $a\geq 1$, then $M$ is a plane. This result was later improved by Kawai~\cite{kaw1} to
$a> 1/4$, see also Ros~\cite{ros2}. In~\cite{mpr13}, the authors proved the conjecture
(for every $a>0$ and in any complete flat three-manifold $N$) under
 the additional hypotheses that $M$ is embedded and it has finite genus.
On the other hand, for small values of $a>0$, there exist
complete, non-flat, immersed, $a$-stable minimal surfaces in $\R^3$:
for instance, apply Lemma~6.3 in~\cite{mpr13} to the universal cover
of any doubly periodic Scherk minimal surface. In the next result we will
prove the conjecture for $a>1/8$ assuming solely that $M$ is embedded.
It should be also mentioned that Fischer-Colbrie~\cite{fi1} proved item~{3}
of the next result in the case $a\geq 1$, independently of whether or not $M$ is
embedded.

\begin{theorem}
\label{corol2.5}
Let $a\in (1/8,\infty )$ and $M$ be a two-sided, complete, embedded,  minimal surface with
compact boundary $\partial M$ in a complete, flat three-manifold $N$. Then:
\begin{enumerate}
\item If $\partial M=\mbox{\O }$ and $M$ is $a$-stable,
then $M$ is totally geodesic.
\item There exists $C>0$ (independent of $M,N$) such that if $\partial M\neq \mbox{\O }$ and
$M$ is $a$-stable, then $|\sigma _M|\, d_M(\cdot ,\partial M)\leq C$ and $M$ has finite total curvature.
\item If $N=\R^3$ and $M$ has finite index of
$a$-stability, then $M$ has finite total curvature.
\end{enumerate}
\end{theorem}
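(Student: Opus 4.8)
The plan is to use the Local Picture Theorem (Theorem~\ref{thm3introd}) to reduce all three items to the model case of a minimal surface in $\rth$, and then to run the $a$-stability machinery of do Carmo--Peng, Fischer-Colbrie--Schoen, Pogorelov, Kawai and Ros~\cite{cp1,fs1,po1,kaw1,ros2} in that model, exploiting embeddedness to enlarge the admissible range to $a>1/8$. The first step is to isolate, and prove separately, the assertion
($\star$) \emph{there is no complete, non-flat, properly embedded minimal surface in $\rth$ that is $a$-stable for some $a>1/8$}. Granting ($\star$), item~1 is immediate, item~2 follows from item~1 together with the Quadratic Curvature Decay Theorem (Theorem~\ref{tt2.4}), and item~3 follows from item~2.

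For item~1, I would argue by contradiction: let $M\subset N$ be complete, embedded, two-sided, $a$-stable, without boundary, in the complete flat (hence homogeneously regular) three-manifold $N$, and suppose $M$ is not totally geodesic. If $|\sigma_M|$ is unbounded, Theorem~\ref{thm3introd} produces a blow-up limit $M_\infty\subset\rth$ that is properly embedded (hence two-sided) and non-flat (indeed $|\sigma_{M_\infty}|(\vec 0)=1$); since the functional in~(\ref{eq:a-stable1}) is scale invariant in dimension two and any $u\in C^\infty_0(M_\infty)$ is, for $n$ large, supported in a rescaled copy $\l_nM_n$ of a subdomain $M_n$ of $M$ on which $a$-stability is inherited from $M$, the $C^k$ multiplicity-one convergence shows $M_\infty$ is $a$-stable, contradicting ($\star$). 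If $|\sigma_M|$ is bounded, I would lift $M$ to $\wt N=\rth$; a component $\wt M$ of the lift is complete, embedded, non-flat and of bounded curvature, hence properly embedded by~\cite{mr8}, and $a$-stable because $a$-stability amounts to the existence of a positive solution of $(\De_M-aK_M)w=0$, a property that lifts to coverings --- again contradicting ($\star$). For item~2, the bound $|\sigma_M|\,d_M(\cdot,\partial M)\le C$ is a Schoen-type curvature estimate, to be proved by contradiction: a sequence of $a$-stable embedded examples violating it, blown up at points of almost-maximal curvature exactly as in the proof of Theorem~\ref{thm3introd}, would converge to a complete, non-flat, $a$-stable minimal surface in $\rth$ with \emph{empty} boundary (the boundary escapes to infinity under rescaling), contradicting item~1. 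Combining this estimate with the (trivial) boundedness of $|\sigma_M|$ on a compact collar of the compact set $\partial M$ gives that $|K_M|R^2=\frac{1}{2}|\sigma_M|^2R^2$ is bounded on $M$, i.e.\ $M$ has quadratic decay of curvature, so Theorem~\ref{tt2.4} yields finite total curvature. For item~3, finite $a$-index forces $M\setminus K_0$ to be $a$-stable for some compact $K_0\subset M$, and item~2 applied to each component of $M\setminus K_0$ (complete, embedded, $a$-stable, with compact boundary in $\rth$) gives it finite total curvature, hence so has $M$.

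It remains to prove ($\star$). Let $M\subset\rth$ be complete, non-flat, properly embedded and $a$-stable with $a>1/8$, and let $G\colon M\to\esf^2$ be its Gauss map. Since $M$ is minimal, $G$ is conformal with $G^*g_{\esf^2}=-K_M\,g_M=\frac{1}{2}|\sigma_M|^2g_M$, and because the Dirichlet energy is conformally invariant in dimension two, $a$-stability of $M$ is equivalent to $\lambda_1(M,h)\ge a$, where $h=G^*g_{\esf^2}$ (defined off the isolated flat points of $M$) and $\lambda_1$ is the bottom of the $L^2(dA_h)$-spectrum of $-\De_h$. If $M$ is a parabolic Riemann surface, then there exist $u_j\in C^\infty_0(M)$ with $u_j\to 1$ and $\int_M|\nabla_{g_M}u_j|^2\,dA_{g_M}\to 0$; by conformal invariance $\int_M|\nabla_hu_j|^2\,dA_h\to 0$ while $\int_Mu_j^2\,dA_h\to \frac{1}{2}\int_M|\sigma_M|^2\,dA_M\in(0,\infty]$, so $\lambda_1(M,h)=0$ and $M$ is not $a$-stable for any $a>0$, a contradiction. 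Thus I may assume $M$ is hyperbolic, and this is exactly where embeddedness and the value $1/8$ enter.

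The plan in the hyperbolic case is to invoke the Fischer-Colbrie--Schoen / do Carmo--Peng construction: $a$-stability yields $w>0$ with $\De_Mw=aK_Mw$, and for $t\ge 1/a$ the conformal metric $w^{2t}g_M$ has non-negative Gauss curvature; the goal is then to force (a variant of) this metric to be \emph{complete}, for then Huber's theorem makes $M$ parabolic, contradicting hyperbolicity and finishing the proof. The hard part will be controlling the decay of $w$ along divergent paths well enough to get this completeness: Kawai's threshold $a>1/4$ for immersed surfaces must be improved, and I would do so by using that a properly embedded $M$ separates $\rth$ into two mean-convex regions, which should permit a one-sided reflection (``doubling'') of the stability inequality across $M$ and gain the decisive factor of two, so that the argument works for all $a>1/8$. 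This completeness/doubling step is the main obstacle; everything else above is routine given Theorems~\ref{thm3introd} and~\ref{tt2.4} and the classical results cited.
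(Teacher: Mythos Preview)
Your reductions of items~1, 2 and~3 to the model statement $(\star)$ are essentially the paper's and are correct. The gap is in your proof of $(\star)$ itself.

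In the hyperbolic case you propose to push the Fischer-Colbrie--Schoen/Kawai argument from $a>1/4$ down to $a>1/8$ by a ``doubling'' or reflection trick using that a properly embedded $M$ separates $\rth$. You do not carry this out, and I do not see how to make it work: the completeness of the conformal metric $w^{2t}g_M$ hinges on a pointwise lower bound for the positive solution $w$ of $\Delta w=aK w$ along divergent curves, and embeddedness gives you no obvious improvement of this pointwise control. The factor-of-two heuristic (two sides of $M$) does not translate into a factor-of-two gain in the exponent governing the decay of $w$. Since this step is exactly the new content for $a\in(1/8,1/4]$, the proposal as written does not prove the theorem in that range.

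The paper avoids this difficulty entirely. After the same blow-up/lifting reduction you describe, so that $M\subset\rth$ is properly embedded, non-flat, with bounded curvature, the paper uses embeddedness in a different way: the maximum principle at infinity gives $M$ a uniform tubular neighborhood, hence at most \emph{cubical} extrinsic area growth. This area-growth bound is then fed into the inverse spectral results of Castillon and B\'erard--Castillon~\cite{cas1,beca1}: for $a>1/4$ one cites~\cite{mpr19,cas1}; for $a=1/4$ one uses part~(ii) of~\cite{beca1}; and for $1/8<a<1/4$ cubical growth is $k_a$-subpolynomial with $k_a=2+\frac{4a}{1-4a}>3$, so part~(iii) of~\cite{beca1} applies. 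In every case one concludes $M$ is conformally $\C$ or $\C-\{0\}$, hence by the classification of properly embedded minimal planar domains~\cite{sc1,mr8} $M$ is a plane, catenoid or helicoid; the last two are $a$-unstable for every $a>0$ by~\cite{mpr13}, so $M$ is a plane. Thus embeddedness enters through the tubular neighborhood/area-growth mechanism, not through any reflection of the stability inequality, and the threshold $1/8$ is precisely where cubical growth ceases to be $k_a$-subpolynomial.
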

\begin{proof}
Suppose that $M\subset N$ has empty boundary and is $a$-stable. After lifting
$M$ to $\R^3$ and applying Theorem~\ref{thm3introd} (note that
$a$-stability is preserved after lifting to a covering, rescaling and taking smooth limits),
we can assume that $M$ has bounded Gaussian curvature and $N=\R^3$, in particular $M$ is
proper~\cite{mr8}. A straightforward application of the maximum
principle at infinity implies that $M$ has an embedded regular
neighborhood of fixed positive radius and so, $M$ has at most
cubical extrinsic area growth, see Meeks and Rosenberg~\cite{mr7}.
The following applications of previous results show that
$M$ is homeomorphic to $\C $ or to $\C -\{ 0\} $:
\begin{itemize}
\item If $a>1/4$, then apply Theorem~2.9 in~\cite{mpr19} (see also Castillon~\cite{cas1}).
\item If $a=1/4$, then apply part~(ii) of Theorem~1.2 in
Berard and Castillon~\cite{beca1}.
\item If $\frac{1}{8}<a<\frac{1}{4}$, then the cubical extrinsic area growth property of
$M$ implies that the intrinsic area growth of
$M$ is $k_a$-subpolynomial, where $k_a=2+\frac{4a}{1-4a}$. This means that
the limit as $r\to \infty $ of the area of the intrinsic ball in $M$ of radius $r$
(centered at any fixed point) divided by $r^{k_a}$ is zero. Now apply part~(iii)
of Theorem~1.2 in~\cite{beca1}.
\end{itemize}
As $M$ is a properly embedded minimal surface in $\R^3$ homeomorphic to $\C $ or to $\C -\{ 0\} $,
then $M$ is either a plane, a catenoid~\cite{sc1} or a helicoid~\cite{mr8}.
As both the catenoid and the helicoid are $a$-unstable for all $a>0$ by Proposition~1.5
in~\cite{mpr13}, we deduce item~{1} of the theorem.

Item { 2} follows from item~{1} by a rescaling argument on the scale of
curvature that is given in the proof of Theorem~\ref{thm3introd} (also see
the proof of Theorem~15 in~\cite{mpe1} for a similar argument).

To see item~{3,} suppose that $M\subset \R^3$ has finite
index of $a$-stability. By Proposition~1 in Fischer-Colbrie~\cite{fi1},
there exists a compact domain $\Omega \subset M$
such that $M-\mbox{Int}(\Omega )$ is $a$-stable. In this situation,
the already proven item~{2} of this theorem implies $M$ has quadratic
decay of curvature. Then, Theorem~\ref{tt2.4} implies that
$M$ has finite total curvature.
%
%
\end{proof}

\section{Proof of the Dynamics Theorem.}
\label{sec8}

Our next goal is to prove the Dynamics Theorem (Theorem~\ref{thm4introd})
stated in the introduction.
Regarding the notions introduced in Definition~\ref{def1.8}, we make the following
observations.
\begin{description}
\item[(i)] If $\Sigma \in D(M)$, then $D(\Sigma )
\subset D(M)$ (this follows by considering double limits).
\end{description}
This property allows us to consider $D$ as an operator $D\colon D(M)\to {\cal P}(D(M))$,
where ${\cal P}(D(M))$ denotes the power set of $D(M)$.
\begin{description}
\item[(ii)] If $\Sigma \in D(M)$ and $D(\Sigma )=\mbox{\O }$, then
$\{ \Sigma \} $ is a minimal $D$-invariant set.

\item[(iii)] $\Sigma \in D(M)$ is quasi-dilation-periodic if and only if
$\Sigma \in D(\Sigma )$.

\item[(iv)] Any minimal element $\Sigma \in D(M)$ is contained in
a unique minimal $D$-invariant set.

\item[(v)] If $\Delta \subset D(M)$ is a minimal $D$-invariant set
and $\Sigma \in \Delta $ satisfies $D(\Sigma )\neq \mbox{\O }$, then
$D(\Sigma )=\Delta $ (otherwise $D(\Sigma )$ would be a proper
non-empty $D$-invariant subset of $\Delta $). In particular,
$\Sigma$ is quasi-dilation-periodic.

\item[(vi)] If $\Delta \subset D(M)$ is a $D$-invariant set and
$\Sigma \in \Delta $ is a minimal element, then the unique minimal
$D$-invariant subset $\Delta'$ of $D(M)$ which contains $\Sigma $
satisfies $\Delta'\subset \Delta $ (otherwise $\Delta'\cap \Delta $
would be a proper non-empty $D$-invariant subset of $\Delta'$).
\end{description}

{\it Proof of Theorem~\ref{thm4introd}.}\
First assume that $M$ has finite total curvature. Then, its total
curvature outside of some ball in space is less than $2\pi $, and
so, any $\Sigma \in D(M)$ must have total curvature less than $2\pi
$, which implies $\Sigma $ is flat. This gives that
 $D(M)=\mbox{\O }$.

Reciprocally, assume that $D(M)=\mbox{\O }$ and $M$
does not have finite total curvature. By
Theorem~\ref{tt2.4}, $M$ does not have quadratic
 decay of curvature, and so, there exists a divergent
sequence of points $z_n\in M$ with
   $|K_M|(z_n)|z_n|^2\to \infty $.
Let $p_n\in \B (z_n,\frac{|z_n|}{2})$ be a maximum of
the function $h_n=|K_M|d_{\R ^3}
(\cdot ,\partial \B (z_n,\frac{|z_n|}{2}))^2$.
 Note that $\{ p_n\} _n$ diverges in
$\R^3$ (because $|p_n|\geq \frac{|z_n|}{2}$). We define
$\l_n=\sqrt{|K_M|(p_n)}$. By similar arguments as those in the proof of
Theorem~\ref{thm3introd} applied to $M\cap \B \left( p_n,\frac{\sqrt{h_n(p_n)}}{2\l _n}
\right) $,
the sequence $\{ \l _n(M-p_n)\} _n$
converges (after passing to a subsequence)  to a minimal lamination
${\cal L}$ of $\R^3$ with a non-flat leaf $L$ which passes through
$\vec{0}$ and satisfies $|K_L|(\vec{0})=1$. Furthermore, the curvature
function $K_{\cal L}$ of ${\cal L}$ satisfies $|K_{\cal L}|\leq 1$
and so, the  leaf $L$ of ${\cal L}$ passing through $\vec{0}$ is
properly embedded in $\R^3$, as are all the leaves
of ${\cal L}$; see~\cite{mr7} for this properness result. By the Strong Half-space Theorem,
${\cal L}$ consists just of $L $, and the convergence of the
surfaces $\l _n(M-p_n)$ to $L$ has multiplicity one (by Lemma~\ref{lemma4.2}
since the two-sided cover of $L$ is not stable as $L$ is not flat). Therefore, $L\in D_1(M)$, which contradicts that
$D(M)=\mbox{\O }$. This proves the equivalence stated in
Theorem~\ref{thm4introd}.

Assume now that $D(M) \neq \mbox{\O}$. Hence, $M$ does not have
finite total curvature and the arguments in the
last paragraph show that $D_1(M)\neq
\mbox{\O }$. To conclude the proof of
item~{1} of the theorem it remains to analyze the
topology of $D_1(M)$.

We will now define a metric space structure on $D_1(M)$ which
generates a topology that coincides with the
topology of uniform $C^k$-convergence on compact
subsets of $\R^3$ for any $k\in \N$ (in particular,
compactness of $D_1(M)$ will follow from sequential
compactness). To do this, we first prove that
there exists some $\ve >0$ such that
$\overline{\B }(\ve )$ intersects every surface
$\Sigma \in D_1(M)$ in a unique component which is a
 graphical disk over its projection to the tangent
space to $\Sigma $ at $\vec{0}$ and with gradient less than 1.
Otherwise, there exists a sequence
$\{ \Sigma _n\} _n\subset D_1(M)$ such that this
property fails in the ball $\overline{\B }(\frac{1}{n})$ for every $n\in \N $.
As the Gaussian curvature of $\Sigma _n$ is not greater than $1$,
the uniform graph lemma~\cite{pro2} implies that around every point $p_n\in \Sigma _n$, this
surface can be locally expressed as a graph over a disk in the tangent space $T_{p_n}\Sigma _n$
of uniform radius. Therefore, there exists $\de >0$ such that for $n$ large, $\Sigma _n$ intersects
 $\overline{\B }(\de )$ in at least two components,
 one of which passes through~$\vec{0}$ and the other
one intersects $\B (\frac{1}{n})$, and such that both
components are graphical over domains in the tangent
space of $\Sigma _n$ at $\vec{0}$ with small gradient.
Hence, a subsequence of these $\Sigma _n$ (denoted in
the same way) converges smoothly to a minimal lamination ${\cal L}_1$ of
$\R^3$ with a leaf $L_1\in {\cal L}_1$ passing through $\vec{0}$ such that
the multiplicity of the limit $\{ \Sigma _n\} _n\to L_1$ is greater than
one. This last property implies that the two-sided cover $\wh{L}_1$
of $L_1$ is stable, by Lemma~\ref{lemma4.2}. As $\wh{L}_1$ is
complete and stable then $\wh{L}_1$ is a plane (and $L_1=\wh{L}_1$), which contradicts
that the convergence $\{ \Sigma _n\} _n\to L_1$ is smooth
and the curvature of the $\Sigma _n$ is $-1$ at
 $\vec{0}$ for every $n$. This proves our claim on the
existence of $\ve $.

With the above $\ve >0$ at hand, we define the distance between
any two surfaces $\Sigma _1,\Sigma _2\in D_1(M)$ as
\[
d(\Sigma _1,\Sigma _2)=d_{\cal H}\left( \Sigma _1\cap
\overline{\B}(\ve /2),\Sigma _2\cap \overline{\B}(\ve /2)\right) ,
\]
where $d_{\cal H}$ denotes the Hausdorff distance.
Standard elliptic theory implies that the metric
topology on $D_1(M)$ associated to the distance $d$
agrees with the topology of the uniform
$C^k$-convergence on compact sets of $\rth$ for any
$k$.

Next we prove that $D_1(M)$ is sequentially compact (hence compact).
Every sequence $\{ \Sigma _n\} _n\subset D_1(M)$ contains a subsequence
which converges to a minimal lamination ${\cal L}$ of
$\R^3$ with bounded Gaussian curvature $K_{\cal L}$
and $K_{\cal L}(\vec{0})=-1$. The same arguments given in the
second paragraph of this proof imply that ${\cal L}$ consists just of
the leaf $L$ passing through~$\vec{0}$, which is a
properly embedded minimal surface in $\R^3$.  Clearly
$L\in D_1(M)$, which proves item~{1} of the
theorem.

We now prove item~{2.} Using the definition of $D$-invariance,
it is elementary to show that for any
$\Sigma \in D(M)$, $D(\Sigma )$ is a closed set in
$D(M)$; essentially, this is because the set of limit
points of a set in a topological space forms a closed
set. That $D(\Sigma )$ is $D$-invariant follows from
property {\bf (i)} stated at the beginning of this section.
Similar techniques show that if $\Delta \subset D(M)$ is a $D$-invariant
subset, then its closure in $D(M)$ is also
$D$-invariant, and item~{2} of the theorem is proved.

Now assume that $\Delta $ is a minimal $D$-invariant
set in $D(M)$. If $\Delta $ contains a surface of
finite total curvature, then the minimality of
$\Delta $ implies $\Delta $ consists only of this
surface, and so, it is connected and closed in $D(M)$.
Suppose now that $\Delta$ does not consist of exactly
one surface of finite total curvature, or equivalently,
$\Delta $ contains no surfaces of finite total
curvature. Then, property {\bf (v)} above implies that
for any $\Sigma \in \Delta $, $D(\Sigma ) = \Delta $.
Since $D(\Sigma )$ is closed by item~{2,}
then $\Delta $ is closed as well. Since $D(\Sigma) =
\Delta$, then $\Delta$ also contains the path connected
subset $S\subset D(M)$ of all dilations of $\Sigma$.
Since $\Delta $ is a closed set in $D(M)$, then the closure of
$S$ in $D(M)$ is contained in $\Delta $. Reciprocally,
if $\Sigma _1\in \Delta =D(\Sigma )$ then $\Sigma _1$ is a
non-flat, properly embedded minimal surface in $\R^3$ which is the
$C^2$-limit of a sequence $\mu _n(\Sigma -p_n)$ for some
$\{ \mu _n\} _n\subset \R^+$ and $\{ p_n\} _n\subset \R^3$,
$p_n\to \infty $. As $\mu _n(\Sigma -p_n)\in S$ for each $n\in \N$, then
$\Sigma _1$ lies in the closure of $S$ in $D(M)$, and so,
$\Delta $ equals the closure of $S$ in $D(M)$ (in particular,
$\Delta $ is connected as $S$ is path connected). This proves
item~3 of the theorem.

Next we prove item~{4}. Suppose
$\Delta \subset D(M)$ is a non-empty, $D$-invariant set.
One possibility is that $\Delta $ contains a
surface $\Sigma $ of finite total curvature. By the
main statement of this theorem, $D(\Sigma )=\mbox{\O }$
and by property {\bf (ii)} above, $\Sigma $ is a minimal
element in $\Delta $. Now assume $\Delta $
contains no surfaces of finite total curvature.
Consider the  collection
\[
\Lambda =\{ \Delta '\subset \Delta \ | \ \Delta '\mbox{ is
non-empty, closed and $D$-invariant}\} .
\]
Note that $\Lambda $ is non-empty, since for any
$\Sigma \in \Delta$, the set $D(\Sigma) \subset \Delta$
is such a closed, non-empty $D$-invariant set by the
first statement in item~{2}.
$\Lambda $ has a partial ordering induced by inclusion.
We just need to check that any linearly ordered subset in
$\Lambda $ has a lower bound, and then apply Zorn's
Lemma to obtain item~{4} of the theorem. Suppose
$\Lambda '\subset \Lambda $ is a non-empty linearly
ordered subset. We must check that the intersection
$\bigcap _{\Delta'\in \Lambda '}\Delta '$ is an element
of $\Lambda $. In our case, this means we only need to prove
that such an intersection is non-empty, because the
 intersection of closed (resp. $D$-invariant)
sets is closed (resp. $D$-invariant).

Given $\Delta '\in \Lambda '$, consider the collection
of surfaces $\Delta'_1=\{ \Sigma \in
\Delta'\ | \ \vec{0}\in \Sigma ,\ |K_{\Sigma } |\leq 1,\ |K_{\Sigma
}|(\vec{0})=1\}$. Note that $\Delta _1'$ is a closed subset of
$D(M)$, since $\Delta'$ and $D_{1}(M)$ are closed in $D(M)$. The set
$\Delta'_{1}$ is non-empty by the following argument. Let $\Sigma
\in \Delta '$. Since $\Sigma $ does not have finite total curvature
and $\Delta '$ is $D$-invariant, $D(\Sigma )$ is a non-empty subset
of $\Delta '$. By item~{1}, $D_1(\Sigma )$ is a non-empty subset
of $\Delta '_1$, and so, $\Delta '_1$ is non-empty. Now define
$\Lambda_1 '=\{ \Delta_1'\ | \ \Delta ' \in
\Lambda '\} $. As $ \bigcap _{\Delta _1'\in \Lambda _1'}\Delta _1'=
\bigcap _{\Delta '\in \Lambda '}\Delta _1'= \bigcap _{\Delta '\in
\Lambda '}\left( \Delta '\cap D_1(M)\right) = \left( \bigcap
_{\Delta '\in \Lambda '}\Delta '\right) \cap D_1(M)$, in order to
check that $\bigcap _{\Delta '\in \Lambda '}\Delta '$ is non-empty,
it suffices to show that $\bigcap _{\Delta '_1\in \Lambda '_1}\Delta
'_1$ is non-empty. But this is clear since each element of
$\Lambda_1'$ is a closed subset of the compact metric space
$D_1(M)$, and the finite intersection property holds for the
collection $\Lambda_1'$.

Next we prove item~{5}. Let $\Delta \subset D(M)$
be a non-empty $D$-invariant subset which contains no surfaces of finite total
curvature. By item~{4}, there exists a minimal element $\Sigma
\in \Delta $. Since none of the surfaces of $\Delta $ have finite
total curvature, it follows that $D(\Sigma )\neq \mbox{\O }$. As
$\Sigma $ is a minimal element, there exists a minimal $D$-invariant
subset $\Delta'\subset D(M)$ such that $\Sigma \in \Delta'$. By property~{\bf (v)}
above, $D(\Sigma )=\Delta'$. Note that $\Delta_1'=\Delta '
\cap D_1(M)$ contains $D_1(\Sigma )$, which is non-empty since $D(\Sigma )\neq
\mbox{\O }$ (by item~{1} of this theorem). Then there exists a
surface $\Sigma _1\in \Delta_1'$, which in particular is a minimal
element (any element of $\Delta'$ is), and lies in $\Delta_1$
(because $\Delta'\subset \Delta $ by property {\bf (vi)}). Finally,
$\Sigma _1$ is dilation-periodic by property {\bf (v)}, thereby proving
item {5} of the theorem.

To prove item~{6}, suppose $\Sigma \in D(M)$ is a
minimal element with finite genus, and assume
also that $\Sigma $ has infinite total curvature.
Since $\Sigma \in D(\Sigma )$ (by property {\bf (v)}
applied to $\Delta =D(\Sigma )$) and $\Sigma $ has
finite genus, then the genus of $\Sigma $ must be zero.
In this setting, the classification results
in~\cite{col1,mr8,mpr6} imply that $\Sigma $ is a
helicoid or a Riemann minimal example.

Finally we prove item~{7.} Consider a minimal
 element $\Sigma $ of $D(M)$ with more than one end.
 By the Ordering Theorem~\cite{fme2}, after possibly
 a rotation in $\R^3$ so that the limit tangent plane
at infinity for $\Sigma $ is horizontal (see~\cite{chm3}
for this notion of limit tangent
plane at infinity), $\Sigma $ has a
 middle end $e$ in the natural ordering of the ends
  of $\Sigma $ by their relative heights.
The results in~\cite{ckmr1} imply
that $e$ is a simple end and $e$ admits an end
representative $E\subset \Sigma $ with the following
properties:
\begin{itemize}
\item $E$ is a proper, non-compact subdomain of
$\Sigma $ with compact boundary and only one end.
\item $E$ is contained in the open region
$W\subset \R^3-\B (R)$ between two graphical, disjoint
vertical half-catenoids or horizontal planes, where $R>0$
is sufficiently large. Furthermore, we can
assume that $\Sigma \cap W=E$.
\end{itemize}
We now discuss two cases for $E$. First assume that $E$ has
quadratic decay of curvature. By Theorem~\ref{tt2.4}, $E$ has
finite total curvature, in which case item~{7} is known to hold.
So, assume that $E$ has infinite total curvature and we will find a
contradiction. On one hand, the proof of the first statement of
this theorem applies to $E$ to produce the collection
$D(E)$ of properly embedded minimal surfaces in $\R^3$ which are limits of $E$ under a
sequence of dilations with divergent translational part (note that
surfaces in $D(E)$ have empty boundary). Notice that under every
divergent sequence of dilations which give rise to a surface in
$D(E)$, the dilated regions $W_n$ related to $W$ converge to all of
$\R^3$, by the Half-space Theorem. This implies that
$D(E)$ is a subset of $D(\Sigma )$. Since $\Sigma $ is a minimal
element, then $D(E)=D(\Sigma )$. On the other hand, the results in~\cite{ckmr1} imply
that $E$ has quadratic area growth, and hence, the monotonicity
formula gives that every surface in $D(E)$ has at most the same
quadratic area growth as $E$. This discussion applies to $\Sigma $
since $\Sigma \in D(\Sigma )$. But since $\Sigma $ has other ends
different from $E$, the quadratic area growth of $\Sigma$ is
strictly greater than the one of $E$. This contradiction proves
item~{7,} thereby finishing the proof of Theorem~\ref{thm4introd}.
{\hfill\penalty10000\raisebox{-.09em}{$\Box$}\par\medskip}
\par
\vspace{.2cm}

\section{Internal dynamical periodicity of properly embedded minimal surfaces with
infinite total curvature.}
\label{sec:6}

Let $M\subset \R^3$ be a properly embedded minimal surface with infinite
total curvature and let $\Sigma \in D(M)$ be a minimal element, which exists by
item~4 of Theorem~\ref{thm4introd}. Assume that $\Sigma $ also has infinite
total curvature. We claim that each compact subdomain of $\Sigma $ can be approximated with
arbitrarily high precision (under dilation) by an infinite collection
of pairwise disjoint compact subdomains of $\Sigma $, and these approximations
can be chosen not too far from each other.
This property can be considered to be a weak notion
of periodicity; next we describe a more
precise statement for this `weak periodicity' property for $\Sigma $.

As $\Sigma \in D(M)$ is a minimal element, there exists a minimal $D$-invariant
subset $\Delta \subset D(M)$ such that $\Sigma\in \Delta $. Since
$\Sigma $ has infinite total curvature, then $D(\Sigma )=\Delta $, in particular
$\Sigma \in D(\Sigma )$ (i.e., $\Sigma $ is quasi-dilation-periodic).
Given $R > 0$ such that $\Sigma $ intersects
$\esf^2(R)$ transversely, let $\Sigma (R)=\Sigma \cap \overline{\B}(R)$.
Since $\Sigma \in D(\Sigma )$, then for every small $\ve>0$ there
exists a collection $\{ \overline{\B }_n = \overline{\B }(p_n, R_n) \}_n$
of disjoint closed balls such that the surfaces $\Sigma_n =
\frac{R}{R_n} ((\Sigma \cap \overline{\B }_n) -p_n)$ can be parameterized by
$\Sigma (R)$ in such a way that as mappings, they are $\ve$-close
to $\Sigma(R)$ in the $C^2$-norm. By Zorn's lemma, any such
collection of balls $\{ \overline{\B }_n\} _n$ is contained in a maximal collection
Max$(\Sigma ,R,\ve )$ of closed round balls $\overline{\B }=\overline{\B }(p,r)$ so that
$\frac{R}{r}[(\Sigma
\cap \overline{\B})-p]$ can be parameterized by $\Sigma (R)$ in such a way that as mappings, they are
$\ve $-close in the $C^2$-norm. After denoting the elements of Max$(\Sigma ,R,\ve )$
by $\overline{\B }_n$, $n\in \N$, we define for every $n\in \N$ the positive number
\begin{equation}
\label{eq:d(n)}
d(n;R,\ve )=\inf _{m\neq n}\left\{ \frac{1}{R_n}\mbox{dist}_{\R^3}(\overline{\B }_n,
\overline{\B }_m)\ | \ \overline{\B}_m\in \mbox{Max}(\Sigma ,R,\ve ),\  m\neq n\right\} ,
\end{equation}
where $\overline{\B }_n,\overline{\B }_m\in \mbox{Max}(\Sigma ,R,\ve )$.
Hence, $d(n;R,\ve )$ measures the minimum relative distance from the ball
$\overline{\B }_n\in \mbox{Max}(\Sigma ,R,\ve )$
to any other ball $\overline{\B }_m$ in the collection
Max$(\Sigma ,R,\ve )$, where by {\it relative} we mean that
$\overline{\B }_n$ is normalized to have
radius 1; this is the task of dividing by $R_n$ in (\ref{eq:d(n)}).
In this situation we will prove the following property, which
expresses in a precise way the `weak periodicity' mentioned in the first paragraph
of this section.

\begin{proposition}
\label{propos4.1}
If $\Sigma (R)$ is not an $\frac{\ve }{2}$-graph over
a disk in a plane, then the sequence \newline
 $\{ d(n;R,\ve )\} _n\subset (0,\infty )$
is bounded from above (the bound depends on $\Sigma, R,\ve $).
\end{proposition}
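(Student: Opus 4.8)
The plan is to prove the contrapositive by contradiction. Assume that $\Sigma (R)$ is \emph{not} an $\frac{\ve }{2}$-graph over a disk in a plane and that $\{ d(n;R,\ve )\} _n$ is unbounded; after passing to a subsequence I may assume $d(n;R,\ve )\to \infty $ for the balls $\overline{\B }_n=\overline{\B }(p_n,R_n)\in \mbox{Max}(\Sigma ,R,\ve )$. Let $\phi _n=\frac{R}{R_n}(\cdot -p_n)$ be the dilation carrying $\overline{\B }_n$ onto $\overline{\B }(R)$. By construction $\phi _n(\Sigma \cap \overline{\B }_n)=\Sigma _n$ is $\ve $-close in $C^2$ to $\Sigma (R)$, and the images $\phi _n(\overline{\B }_m)$ of the remaining balls lie at Euclidean distance at least $R\, d(n;R,\ve )\to \infty $ from $\overline{\B }(R)$. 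Since $\Sigma _n$ is $C^2$-close to the \emph{fixed} surface $\Sigma (R)$, the second fundamental form of $\phi _n(\Sigma )$ is uniformly bounded on $\overline{\B }(R)$; using the Local Picture Theorem on the Scale of Curvature together with the compactness, properness and Strong Halfspace arguments for minimal laminations of $\R ^3$ employed in the proof of Theorem~\ref{thm4introd}, I extract a subsequence for which $\phi _n(\Sigma )$ converges to a properly embedded minimal surface $L\subset \R ^3$ with $\vec{0}\in L$, with multiplicity-one convergence near $L$, and with $L\cap \overline{\B }(R)$ within $\ve $ in $C^2$ of $\Sigma (R)$.

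Next I claim $L$ is not flat and $L\in D(\Sigma )$. If $L$ were a plane, then $\Sigma (R)$ would be $C^2$-close to a flat disk through $\vec{0}$, hence a graph of small norm over a planar disk, contradicting our hypothesis; so $L$ is non-flat. The same remark shows the translational parts $p_n$ diverge: if a subsequence of $\{ p_n\} _n$ remained bounded, disjointness of the $\overline{\B }_n$ would force $R_n\to 0$ along it, whence $\phi _n(\Sigma )$ would converge to the tangent plane of $\Sigma $ at $\lim p_n$ and $L$ would be flat. Thus $\{ \phi _n\} _n$ is a divergent sequence of dilations of $\Sigma $ and $L\in D(\Sigma )$. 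Since $\Sigma $ is a minimal element of $D(M)$ with infinite total curvature, $D(\Sigma )$ equals the minimal $D$-invariant set $\Delta \ni \Sigma $, and by item~3 of Theorem~\ref{thm4introd} one has $D(\Xi )=\Delta $ for every $\Xi \in \Delta $; in particular $D(L)=\Delta \ni \Sigma $, i.e.\ $\Sigma \in D(L)$.

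Now I pull the self-similarity of $L$ back to $\Sigma $. Since $\Sigma \in D(L)$, there is a divergent sequence of dilations $\nu _j(L-a_j)\to \Sigma $ in $C^2$ on compact sets, of multiplicity one (Lemma~\ref{lemma4.2}, as $\Sigma $ is not flat). I fix one index $j_0$ large enough that, writing $a=a_{j_0}$ and $\nu =\nu _{j_0}$ (which depend only on $\Sigma ,R,\ve $), the single copy $\nu \big((L\cap \overline{\B }(a,\tfrac{R}{\nu }))-a\big)$ is $\tfrac{\ve }{2}$-close in $C^2$ to $\Sigma (R)$, $L$ is transverse to $\esf ^2(a,\tfrac{R}{\nu })$, and $|a|>R(1+\tfrac{1}{\nu })$, so that the ball $\overline{\B }(b_n,r_n)$ with $b_n=p_n+\tfrac{R_n}{R}a$ and $r_n=\tfrac{R_n}{\nu }$ is disjoint from $\overline{\B }_n$. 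Because $\phi _n(\Sigma )\to L$ in $C^2$ on $\overline{\B }(a,\tfrac{R}{\nu })$ with multiplicity one (and $L$ is transverse to the bounding sphere), for all large $n$ the piece $\nu \big((\phi _n(\Sigma )\cap \overline{\B }(a,\tfrac{R}{\nu }))-a\big)$ is $\ve $-close in $C^2$ to $\Sigma (R)$; applying $\phi _n^{-1}(x)=p_n+\tfrac{R_n}{R}x$, this says precisely that $\tfrac{R}{r_n}\big((\Sigma \cap \overline{\B }(b_n,r_n))-b_n\big)$ is $\ve $-close in $C^2$ to $\Sigma (R)$, i.e.\ $\overline{\B }(b_n,r_n)$ is an admissible ball for the collection defining $\mbox{Max}(\Sigma ,R,\ve )$.

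Finally, since $\overline{\B }(b_n,r_n)$ is admissible and disjoint from $\overline{\B }_n$, maximality of $\mbox{Max}(\Sigma ,R,\ve )$ forces it to meet some ball $\overline{\B }_{k(n)}$ of the collection with $k(n)\neq n$, and then
\[
\mbox{dist}_{\R ^3}(\overline{\B }_n,\overline{\B }_{k(n)})\leq \mbox{dist}_{\R ^3}(\overline{\B }_n,\overline{\B }(b_n,r_n))+\mbox{diam}(\overline{\B }(b_n,r_n))\leq \tfrac{R_n|a|}{R}+\tfrac{2R_n}{\nu }=C_0\, R_n,
\]
with $C_0=\tfrac{|a|}{R}+\tfrac{2}{\nu }$ depending only on $\Sigma ,R,\ve $. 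Hence $d(n;R,\ve )\leq \tfrac{1}{R_n}\mbox{dist}_{\R ^3}(\overline{\B }_n,\overline{\B }_{k(n)})\leq C_0$ for all large $n$, contradicting $d(n;R,\ve )\to \infty $ and proving the proposition. I expect the main obstacle to be the extraction of the limit surface $L$ in the first paragraph — ensuring that the limit is a \emph{single} properly embedded leaf, with no spurious singular set of convergence and no extra sheets of $\phi _n(\Sigma )$ inside the working balls — which is precisely where one leans on the machinery of~\cite{mpr10} and the Strong Halfspace Theorem as in the proof of Theorem~\ref{thm4introd}; a secondary, more bookkeeping issue is the precise calibration between the $\ve $-closeness of $\Sigma _n$ to $\Sigma (R)$ and the $\tfrac{\ve }{2}$-graph condition used in the flat case.
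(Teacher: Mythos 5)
Your pull-back argument in the third and fourth paragraphs is sound as a mechanism for contradicting the maximality of $\mbox{Max}(\Sigma ,R,\ve )$, but the object you feed it --- the limit surface $L$ --- cannot be produced in the way you describe. The admissibility of the balls $\overline{\B }_n$ only gives a $C^2$-bound on $\phi _n(\Sigma )$ \emph{inside} $\overline{\B }(R)$; it gives no control on the second fundamental form of $\phi _n(\Sigma )$ in, say, $\overline{\B }(3R)\setminus \B (R)$, and the paper's argument shows that the curvature there \emph{must} in fact blow up under the contradiction hypothesis. So the first paragraph cannot be repaired by waving at the Local Picture Theorem, the Strong Halfspace Theorem, or the machinery of~\cite{mpr10}: that machinery requires bounded curvature on the scale under consideration, which is exactly what fails here.

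There is a second, independent gap. Even in the case where curvature does stay bounded, your argument tacitly needs $L$ to have infinite total curvature, since you invoke $\Sigma \in D(L)$ (which requires $D(L)\neq \mbox{\O }$). But the paper shows that under bounded curvature, the limit lamination has quadratic curvature decay, and hence by Proposition~\ref{corollamin} it is a single leaf of \emph{finite} total curvature. In that case $D(L)=\mbox{\O }$ and the divergent sequence $\nu _j(L-a_j)\to \Sigma $ that drives your third paragraph simply does not exist. (The correct contradiction in this sub-case is different: $L\in D(\Sigma )=\Delta $ with $D(L)=\mbox{\O }$ forces $\Delta =\{ L\} $, hence $\Sigma =L$, contradicting infinite total curvature of $\Sigma $.) The paper's actual strategy therefore splits into two steps you do not have: first a maximality argument to show the curvature of $\widehat{\Sigma }(i)$ is locally bounded outside $\overline{\B }(R)$ but \emph{not} uniformly bounded on $\overline{\B }(3R)$; then a further blow-up at the curvature concentration points producing a surface $\Sigma ''\in D_1(\Sigma )$, to which a pull-back/maximality argument very much in the spirit of your last two paragraphs is applied. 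So your ``secondary bookkeeping issue'' is benign, but the ``main obstacle'' you flagged is fatal rather than a formality, and the role your $L$ is meant to play is actually played in the paper by the second blow-up $\Sigma ''$.
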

\begin{proof}
We argue by contradiction. Otherwise,
there exists a sequence of integers
$\{ n(i)\} _{i\in \N }$ such that $d(n(i);R,\ve)\geq i$ for all $i\in \N$.
We define $\widehat{\Sigma }(i)=\frac{R}{R_{n(i)}}
(\Sigma -p_{n(i)})$, $i\in \N $. Observe that the following two properties hold:
\begin{description}
\item[{\rm (P1)}] The Gaussian
curvature of $\{ \widehat{\Sigma }(i)\cap \overline{\B }(R)\} _i$
is uniformly bounded (by the defining properties of the balls in the family
$\mbox{Max}(\Sigma ,R,\ve )$).
\item[{\rm (P2)}] Given
$\overline{\B }_m\in \mbox{Max}(\Sigma ,R,\ve )$ with $m\neq n(i)$,
the closed ball $\overline{\B }'_m=\frac{R}{R_{n(i)}}
(\overline{\B }_m-p_{n(i)})$ is at distance at least $iR$ from $\overline{\B }(R)$.
\end{description}
We claim that the sequence of surfaces
$\widehat{\Sigma }(i)$ has locally bounded Gaussian
 curvature outside of $\overline{\B }(R)$: otherwise,
the Gaussian curvature $K_{\widehat{\Sigma }(i)}$ blows-up
around a point $P\in \R^3-\overline{\B }(R)$, and in this case
we can blow-up $\widehat{\Sigma }(i)$ on the scale of curvature
around points of almost-maximal curvature tending to $P$,
thereby obtaining a new limit surface $\Sigma '$ of rescaled copies
of portions of $\widehat{\Sigma }(i)$ in small extrinsic balls around $P$
(in particular, these small balls are disjoint from the balls corresponding
to elements in $\mbox{Max}(\Sigma ,R,\ve )$).
Hence, $\Sigma '$ lies in $D(\Sigma )$; since $\Sigma $ is a minimal element, then
$\Sigma \in D(\Sigma ')$, which contradicts the maximality of the
collection Max$(\Sigma ,R,\ve )$. This contradiction proves our claim.

As $\{ \widehat{\Sigma }(i)\cap [\R^3 -\overline{\B}(R)]\} _i$ has locally bounded Gaussian
 curvature, we conclude that after replacing by a subsequence,
  $\{ \widehat{\Sigma }(i)\cap [\R^3-
 \overline{\B }(2R)]\} _i$ converges as $n\to \infty $ to a minimal lamination
 ${\cal L}'(2R)$ of $\R^3-\overline{\B }(2R)$.
 This lamination ${\cal L}'(2R)$ has quadratic decay
 of curvature (otherwise we again contradict the
 minimality of $\Sigma $ and the maximality of Max$(\Sigma ,R,\ve )$ as before), and so, ${\cal L}'(2R)$
 has bounded curvature in $\R^3-\B (3R)$.

 Next we claim that the surfaces $\widehat{\Sigma }(i)\cap
\overline{\B }(3R)$ do not have uniformly bounded
curvature. Arguing again by contradiction, the failure of our claim together with
the arguments in the last paragraph imply that
 $\{ \widehat{\Sigma }(i)\} _i$ converges to a lamination
 ${\cal L}'$ of $\R^3$ with quadratic curvature decay.
Since we are assuming that $\Sigma (R)$ is not
an $\frac{\ve }{2}$-graph over a flat disk, then
${\cal L}'$ cannot be flat. Hence,
Proposition~\ref{corollamin} implies that
${\cal L}'$ consists of a single leaf which is
a properly embedded minimal surface $\Sigma '$
with finite total curvature. As before, this implies that
$\Sigma '\in D(\Sigma )$. This is absurd, since then
$\Sigma \in D(\Sigma ')=\mbox{\O }$. This contradiction shows that
the $\widehat{\Sigma }(i)\cap
\overline{\B }(3R)$ do not have uniformly bounded
curvature.

Therefore, after extracting a subsequence we can find for each $i\in \N$ a point
$\widehat{p}(i)\in \widehat{\Sigma }(i)\cap
\overline{\B }(3R)$ such that
$|K_{\widehat{\Sigma }(i)}|(\widehat{p}(i))\to
\infty $ as $i\to \infty $. We can also assume
that $|K_{\widehat{\Sigma }(i)}|$ attains its
maximum value in the compact set $\widehat{\Sigma }(i)\cap \overline{\B }(3R)$ at
$\widehat{p}(i)$, for all $i$. After translating by
$-\widehat{p}(i)$, rescaling by
$\sqrt{|K_{\widehat{\Sigma }(i)}|(\widehat{p}(i))}$
and extracting another subsequence, we obtain a new limit
surface $\Sigma ''\in D_1(\Sigma )$. We now have two possibilities,
depending on whether or not the sequence
of open balls $\{ \sqrt{|K_{\widehat{\Sigma }(i)}|(\widehat{p}(i))}
(\B (R)-\widehat{p}(i))\} _i$ eventually leaves every compact
set of $\R^3$.

Firstly suppose that the sequence
of open balls $\{ \sqrt{|K_{\widehat{\Sigma }(i)}|(\widehat{p}(i))}
(\B (R)-\widehat{p}(i))\} _i$ fails to leave every compact
set of $\R^3$. Then, after choosing a subsequence these balls converge
to a closed halfspace $H^+$ and by property (P1) above,
$\Sigma ''\cap H^+$ is flat. As $\Sigma ''$
is not flat, then we conclude that $\Sigma ''$ cannot
intersect $H^+$, or equivalently $\Sigma ''$ is contained
in a halfspace. This contradicts the fact that the
Gaussian curvature of the non-flat surface
$\Sigma ''$ is bounded, see~\cite{mr8}. Therefore,
the sequence of balls $\{ \sqrt{|K_{\widehat{\Sigma }(i)}|(\widehat{p}(i))}
(\B (R)-\widehat{p}(i))\} _i$ leaves every compact
set of $\R^3$. Since
$\Sigma ''\in D(\Sigma )$ and $\Sigma $ is a minimal element,
then $\Sigma \in D(\Sigma '')$, which by the same arguments as before
contradicts the maximality of the collection Max$(\Sigma ,R,
\ve )$. Now the proposition is proved.
\end{proof}

\begin{remark} \label{almost-finite} {\rm
We remark that in general there exist properly
embedded minimal surfaces $M$ with infinite total curvature,
such that $D(M)$ contains more than one minimal
$D$-invariant set, see some examples in the discussion before Theorem~11.0.13
in~\cite{mpe10}.
}
\end{remark}

\center{William H. Meeks, III at bill@math.umass.edu\\
Mathematics Department, University of Massachusetts, Amherst, MA 01003}
\center{Joaqu\'\i n P\'{e}rez at jperez@ugr.es \qquad\qquad Antonio Ros at aros@ugr.es\\
Department of Geometry and Topology, University of Granada, Granada, Spain}

\end{document}